\newtheorem{theorem}{Theorem}[section]
\newtheorem{lemma}{Lemma}[section]
\newtheorem{proposition}{Proposition}[section]
\newtheorem{definition}{Definition}[section]
\newtheorem{remark}{Remark}[section]
\newcommand{\bal}{\begin{align}}
\newcommand{\bbal}{\begin{align*}}
\newcommand{\beq}{\begin{equation}}
\newcommand{\eeq}{\end{equation}}
\newcommand{\bca}{\begin{cases}}
\newcommand{\eca}{\end{cases}}
\newcommand{\pa}{\partial}
\newcommand{\fr}{\frac}
\newcommand{\na}{\nabla}
\newcommand{\De}{\Delta}
\newcommand{\ep}{\varepsilon}
\newcommand{\dd}{\mathrm{d}}
\newcommand{\B}{\dot{B}}
\newcommand{\LL}{\tilde{L}}
\newcommand{\R}{\mathbb{R}}
\newcommand{\D}{\mathrm{div}}
\newcommand{\ee}{\vec{e}}
\begin{document}
\title{Ill-posedness issue on a multidimensional chemotaxis equations in the critical Besov spaces}

\author{Jinlu Li$^{1}$, Yanghai Yu$^{2,}$\footnote{E-mail: lijinlu@gnnu.edu.cn; yuyanghai214@sina.com(Corresponding author); mathzwp2010@163.com} and Weipeng Zhu$^{3}$\\
\small $^1$ School of Mathematics and Computer Sciences, Gannan Normal University, Ganzhou 341000, China\\
\small $^2$ School of Mathematics and Statistics, Anhui Normal University, Wuhu 241002, China\\
\small $^3$ School of Mathematics and Big Data, Foshan University, Foshan, Guangdong 528000, China}

\date{\today}

\maketitle\noindent{\hrulefill}

{\bf Abstract:} In this paper, we aim to solving the open question left in [Nie, Yuan: Nonlinear Anal 196 (2020); J. Math. Anal. Appl 505 (2022) and Xiao, Fei: J. Math. Anal. Appl 514 (2022)]. We prove that a multidimensional chemotaxis system is ill-posedness in $\dot{B}_{2d, r}^{-\frac{3}{2}} \times\big(\dot{B}_{2d, r}^{-\frac{1}{2}}\big)^{d}$ when $1\leq r<d$ due to the lack of continuity of the solution.

{\bf Keywords:} Multidimensional chemotaxis equations, Ill-posedness, Besov spaces

{\bf MSC (2010):} 35G55; 35Q92; 92C17
\vskip0mm\noindent{\hrulefill}

\section{Introduction}

In this paper, we consider the Cauchy problem of the following multidimensional ($d\geq2$) chemotaxis equations
\begin{equation}\label{che}
\begin{cases}
\pa_tu-\De u=\D(uv),& (t,x)\in\R^+\times\R^d,\\
\pa_tv-\na u=0,&(t,x)\in\R^+\times\R^d,\\
(u,v)(0,x)=(u_0(x),v_0(x)),& x\in\R^d,
\end{cases}
\end{equation}
here the scalar unknown function $u(t, x)$ represents the cell density and the vector unknown function $v(t, x)=-\nabla\ln c$, where $c$ is the chemical concentration.

For more than a century, biologists have observed that certain species of bacteria are preferred to move
toward higher concentrations of some chemicals, such as minerals, oxygen and organic nutrients. This biased movement, universally referred to as chemotaxis, has been fueling interest of both experimentalists and
theoreticians since it plays a vital role in wide-ranging biology phenomena \cite{hil}.
Many diverse disciplines involve chemotaxis models whose
aspects include not only the mechanistic basis and biological foundations but also the modeling of specific systems and the mathematical analysis of the governing nonlinear equations. The following classical Keller-Segel type chemotaxis model \cite{Ke1} reads:
\begin{equation}\label{lim}
\begin{cases}
\partial_{t} u=\D\left(\mu \nabla u-\chi u \nabla \Phi(c)\right), \\
\tau \partial_{t} c=\kappa \Delta c+g(u, c),
\end{cases}
\end{equation}
where $u$ stands for the cell density and $c$ for the chemical concentration. We denote by $\mu>0$ the diffusion rate of the cells and $\kappa \geq 0$ the diffusion rate of the chemical substance, and denote $\chi>0$ and $\chi<0$ as the attractive chemotaxis and the repulsive chemotaxis, respectively. The relaxation time scale $\tau$ is non-negative, and the function $\Phi(c)$ is the chemotactic potential function and $g(u, c)$ is the chemical kinetics.
As mentioned in \cite{lpz}, the Keller-Segel model
of chemotaxis \cite{Ke1,Ke2,Ke3} has provided that, a cornerstone for much of these works,
its success being a consequence of its intuitive simplicity, analytical tractability, and capability to model
the basic dynamics of chemotactic populations.

For the Keller-Segel model \eqref{lim}, there are two limiting cases: $\kappa\to0$ and $\tau\to0$. In this present paper, we only
consider the case $\kappa\to0$. We take $\kappa=0,\mu=\tau=1$, $\Phi(c)=\ln c$, $g(u, c)=-u c$
and $\chi > 0$ which corresponds to the attractive
chemotaxis, then \eqref{lim} reduces
\begin{equation}\label{lim1}
\begin{cases}
\partial_{t} u=\Delta u-\D\left(\chi u \nabla \ln c\right), \\
\partial_{t} c=-uc.
\end{cases}
\end{equation}
Setting  $v=-\nabla \ln c$, by suitable scaling, \eqref{lim1} becomes the hyperbolic-parabolic model \eqref{che}. We can check that if $(u, v)$ solves \eqref{che}, so does $(u_\ell,v_\ell)$ where
$(u_\ell,v_\ell)(t,x)=(\ell^{2} u(\ell^{2} t, \ell x), \ell v(\ell^{2} t, \ell x)).$ This suggests us to choose initial data $(u_0,v_0)$ in ``critical spaces" whose norm is invariant (up to a constant independent of $\ell$) for all $\ell>0$ by the transformation $(u_{0}, v_{0})(x) \mapsto(\ell^{2} u_{0}(\ell x), \ell v_{0}(\ell x))$.
It is natural that $\dot{H}^{\frac{d}{2}-2}(\R^d) \times(\dot{H}^{\frac{d}{2}-1}(\R^d))^{d}$ and $\dot{B}_{p, q}^{\frac{d}{p}-2}(\R^d) \times(\dot{B}_{p, q}^{\frac{d}{p}-1}(\R^d))^{d}$are critical spaces to  \eqref{che}.

There is a huge literature on the studies of the well-posedness problem and long-time behaviors of solutions for the Keller-Segel type
chemotaxis model due to its capturing the principal features of the basic
dynamics of chemotactic population.  For more background of the chemotaxis model and more relevant results, we refer to \cite{cx,tao1,tao3,tao4,win1,win2,win3,win4,win5,win6,win7,win8,win9,zzc,Zhang1,Zhang2,Zhang3,Zhang4} and references therein. Next, concerned with the model \eqref{che}, we briefly review some results which focused on well-posedness problems. Li-Wang \cite{lw1,lw2} proved nonlinear stability of traveling waves of
arbitrary amplitudes to repulsive chemotaxis model. Li-Li-Zhao \cite{llz} established the local and
global well-posedness in the Sobolev space $H^s$ with $s > 1+d/2$. They also showed that solution converges exponentially to the constant steady
state with a frequency-dependent decay rate as time goes to infinity when the initial data is suitably close
to a constant positive steady state. In the case of one dimension, Li-Pan-Zhao \cite{lpz} proved the global
existence of classical solutions and the solutions converge exponentially to constant equilibrium states in time
for large initial data. Moreover, they obtained similar results for the multidimensional model when the
initial data are small. Hao \cite{hao} showed the global existence and uniqueness of the strong solution for initial data close to a constant equilibrium state in critical Besov spaces $\dot{B}_{2, 1}^{\frac{d}{2}-2}(\R^d) \times(\dot{B}_{2, 1}^{\frac{d}{2}-1}(\R^d))^{d}$. Nie-Yuan \cite{n1} established the local well-posedness and global well-posedness of \eqref{che} with small initial data in $\dot{B}_{p, 1}^{\frac{d}{p}-2}(\R^d) \times(\dot{B}_{p, 1}^{\frac{d}{p}-1}(\R^d))^{d}$ when $1\leq p<2 d$. For the related Keller-Segel systems, many results involving the  finite-time blow-up of solution are available, we refer to \cite{KS,La,win3} and references therein.

In this paper, we are mainly focused on ill-posedness of solutions to system \eqref{che} in some critical Besov spaces. From the PDE's point of view, it is crucial to know if an equation which models a physical phenomenon is well-posed in the
Hadamard's sense: existence, uniqueness, and continuous dependence of the solutions with respect to the initial data. In particular, the lack of continuous dependence would cause incorrect solutions or non meaningful solutions. Indeed, this means that the corresponding equation is ill-posed. Many results with regard to the ill-posedness have been obtained for some important nonlinear PDEs including the incompressible Navier-Stokes equations \cite{Bou,wang,yon}, the stationary Navier-Stokes equations \cite{Tsu1,Lyz}, the compressible Navier-Stokes equations \cite{Chen,Iwa} and so on. Recently, there have been a few results about ill-posedness of
system \eqref{che} in critical Besov spaces.
Nie-Yuan \cite{n1} proved that \eqref{che} is ill-posed in $\dot{B}_{p, 1}^{\frac{d}{p}-2}(\R^d) \times(\dot{B}_{p, 1}^{\frac{d}{p}-1}(\R^d))^{d}$ when $p>2 d$. Later on, for the critical case $p=2 d$, Nie-Yuan \cite{n2} further proved that \eqref{che} is ill-posed in $\dot{B}_{2d, 1}^{-\frac{3}{2}}(\R^d) \times(\dot{B}_{2d, 1}^{-\frac{1}{2}}(\R^d))^{d}$ by fully exploiting nonlinear structure of the cross term. Subsequently, Xiao-Fei \cite{x} proved the ill-posedness of \eqref{che} in $\dot{B}_{2 d, r}^{-\frac{3}{2}}(\R^d) \times(\dot{B}_{2 d, r}^{-\frac{1}{2}}(\R^d))^{d}$ for $r>2$ by a different framework which give a special initial data. Obviously, there are still gaps on the index $r$ between Nie-Yuan and Xiao-Fei's ill-posedness results.  Precisely speaking, for the case $1<r\leq 2$, it is still unknown whether \eqref{che} in $\dot{B}_{2 d, r}^{-\frac{3}{2}}(\R^d) \times(\dot{B}_{2 d, r}^{-\frac{1}{2}}(\R^d))^{d}$ is well-posed or ill-posed. In this paper, we shall answer this question.

\subsection{Main Result}
\quad
The main result of this paper is the following:
\begin{theorem}\label{th3}
Let $d\geq 2$ and $1\leq r<d$. \eqref{che} is ill-posed in $\dot{B}_{2 d, r}^{-\frac{3}{2}}(\R^d) \times\Big(\dot{B}_{2 d, r}^{-\frac{1}{2}}(\R^d)\Big)^{d}$  in the following sense: There exists a sequence of initial data $ \Big\{(u_{0,n},v_{0,n})\Big\}_{n=1}^{\infty}\subset \dot{B}_{2 d, r}^{-\frac{3}{2}}\times\Big(\dot{B}_{2 d, r}^{-\frac{1}{2}}\Big)^{d}$ satisfying
$$
\lim _{n \rightarrow \infty}\left(\|u_{0,n}\|_{\B^{-\frac32}_{2d,r}} +\|v_{0,n}\|_{\B^{-\frac12}_{2d,r}}\right)= 0,
$$
 such that the corresponding solution $(u,v)$ to \eqref{che} satisfies
$$
\|u(t_n,\cdot)\|_{\B^{-\frac32}_{2d,r}} +\|v(t_n,\cdot)\|_{\B^{-\frac12}_{2d,r}} \geq c(\ep)>0 \quad  with \quad t_n= \ep2^{-2n},
$$
where $\ep$ is some sufficiently small positive constant.
\end{theorem}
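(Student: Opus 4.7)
The plan is to establish norm inflation: exhibit a sequence $(u_{0,n}, v_{0,n})$ vanishing in the critical Besov norms whose corresponding solutions acquire an $O(1)$ low-frequency component at the matching time $t_n = \ep 2^{-2n}$, powered by the bilinear cross-term $\div(uv)$. For each large $n$ I take initial data built from a high-frequency ``beat'' between $2^n$ and $2^n+1$, schematically
$$u_{0,n}(x) = A_n\,\phi(x)\cos(2^n x_1), \qquad v_{0,n}(x) = B_n\,\phi(x)\cos\bigl((2^n+1) x_1\bigr) e_1,$$
with $\phi$ a fixed Schwartz bump and amplitudes $A_n, B_n$ tuned so that $\|u_{0,n}\|_{\B^{-3/2}_{2d,r}} + \|v_{0,n}\|_{\B^{-1/2}_{2d,r}} \to 0$. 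Since the critical scaling defeats a single-beat ansatz (the na\"ive bilinear response $A_n B_n\,2^{-2n}$ is exactly the product of the Besov norms of the data), the construction must be refined by superposing many coherent beats---either at closely-spaced frequencies inside the shell $\{|\xi|\sim 2^n\}$ or spread across several nearby dyadic scales. This is where the hypothesis $r<d$ enters, through the mismatch between the $\ell^r$-summation in the Besov seminorm of the data and the $\ell^1$-type summation that governs the coherent diagonal bilinear response, yielding an amplification factor growing with the number of coherent modes.

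Next, using Duhamel for the first equation and direct integration for the second,
$$u(t) = e^{t\De} u_{0,n} + \int_0^t e^{(t-s)\De}\,\div(uv)(s)\,\dd s, \qquad v(t) = v_{0,n} + \int_0^t \na u(s)\,\dd s,$$
I split $u = u_L + u_N$, $v = v_L + v_N$, where $u_L(t) := e^{t\De}u_{0,n}$ and $v_L(t) := v_{0,n} + \int_0^t \na u_L(s)\,\dd s$. The linear pieces at $t=t_n$ are controlled by the norms of the data and hence vanish as $n\to\infty$. For the leading quadratic contribution $\int_0^{t_n} e^{(t_n - s)\De}\,\div(u_L(s)\,v_L(s))\,\dd s$ I project onto the low-frequency block $\DDe_0$; the heat semigroup acts as (essentially) the identity on low frequencies, and replacing $u_L(s)$ by $e^{-c s\,2^{2n}}u_{0,n}$ and $v_L(s)$ by $v_{0,n}$, the beating $\cos(2^n x_1)\cos((2^n+1) x_1)$ produces a genuine low-frequency mode proportional to $\cos(x_1)\phi^2$; time integration contributes a factor $(1-e^{-c\ep})/(c\,2^{2n})$, and together with the multi-mode amplification from the refined construction this yields
$$\|\DDe_0 u(t_n)\|_{L^{2d}} \geq c(\ep) > 0,$$
whence $\|u(t_n)\|_{\B^{-3/2}_{2d,r}} \gtrsim c(\ep)$. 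A parallel lower bound on $\|v(t_n)\|_{\B^{-1/2}_{2d,r}}$ can be harvested from the identity $v(t_n) - v_{0,n} = \int_0^{t_n}\na u(s)\,\dd s$ once the low-frequency content of $u$ is pinned down.

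Finally, the remaining higher-order pieces $\int_0^{t_n} e^{(t_n-s)\De}\,\div(u_L v_N + u_N v_L + u_N v_N)\,\dd s$ and $\int_0^{t_n}\na u_N(s)\,\dd s$ are bounded by $o(1)$ in the critical norms via a fixed-point/contraction argument on the short interval $[0,t_n]$, so they cannot cancel the lower bound of the previous step. The main obstacle in the whole scheme is the first step: the critical scaling exactly matches the na\"ive bilinear response to the product of the Besov norms of the data, so the entire proof hinges on engineering the multi-mode amplification that exploits the hypothesis $r<d$ to overcome this barrier. Designing the correct superposition---the number of beats, their amplitudes, and their frequency layout---so as to keep the Besov norms of the data tending to zero while producing an $O(1)$ coherent low-frequency response uniformly in $n$ is the technical heart of the proof.
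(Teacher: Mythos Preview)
Your high-level plan---Duhamel, split $u=U_1+U_2+U_3$ with $U_2$ the leading quadratic term, show $U_2$ is large and the rest small---matches the paper. But the amplification mechanism you propose is the wrong one, and it does not yield the condition $r<d$. If the data are spread over $N$ dyadic blocks with equal Besov coefficients $\alpha$, the data norm is $\alpha N^{1/r}$, while a coherent diagonal bilinear landing at $\dot\Delta_0$ gives a response of size $\alpha^2 N$; the gain $N^{1-2/r}$ is positive only for $r>2$, which is exactly Xiao--Fei's regime, not the open range $1<r\le 2$. Likewise, projecting onto a single low block $\dot\Delta_0$ cannot exploit the $\ell^r$-summability in the \emph{response}.

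The paper reverses the roles. The data $(u_{0,n},v_{0,n})$ are Fourier-supported in a \emph{single} dyadic shell $\{|\xi|\sim 2^n\}$, so their Besov norms reduce to a bare $L^{2d}$ norm with no $\ell^r$ sum at all. The multi-mode structure lives in \emph{physical} space: $f_n$ is a sum of $\sim n$ widely-separated bumps $\phi\bigl(2^k(x-2^{2n+k}\ee)\bigr)$ at \emph{different spatial scales} $2^{-k}$, $k\in[n/4,n/2]$, all modulated by the same carrier $\sin(\tfrac{17}{12}2^n x_1)$. Because the bumps have essentially disjoint supports, $\|f_n\|_{L^{2d}}$ scales like $n^{1/(2d)}$ (this is where $p=2d$ enters). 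Squaring kills the carrier via $\sin^2=\tfrac12(1-\cos)$ and produces a non-oscillating part whose Fourier transform is spread over the $\sim n$ \emph{intermediate} blocks $\dot\Delta_k$, $k\in[n/4,n/2]$---not over $\dot\Delta_0$. The $\ell^r$ sum in the Besov norm of the response then contributes $n^{1/r}$. Choosing the prefactor $n^{-1/(2r)}$ makes the response $O(1)$ while the data norm is $n^{1/(2d)-1/(2r)}\to 0$ precisely when $r<d$. The translations $2^{2n+k}\ee$ are essential: they suppress the cross terms $k\neq j$ both in $\|f_n\|_{L^{2d}}$ and in $\dot\Delta_\ell(f_n^2)$. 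Without this physical-space-to-frequency-space transfer, your proposed $\ell^r$-versus-$\ell^1$ bookkeeping will not reach $r<d$.
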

\begin{remark}
Theorem \ref{th3} demonstrates that if $d\geq 2$ and $1\leq r<d$, there exists a sequence of initial data which converges to zero in $\dot{B}_{2 d, r}^{-\frac{3}{2}}(\R^d) \times\Big(\dot{B}_{2 d, r}^{-\frac{1}{2}}(\R^d)\Big)^{d}$ and yields a sequence of solutions to \eqref{che} which does not converge to zero in $\dot{B}_{2 d, r}^{-\frac{3}{2}}(\R^d) \times\Big(\dot{B}_{2 d, r}^{-\frac{1}{2}}(\R^d)\Big)^{d}$. In other words, \eqref{che} is ill-posed in $\dot{B}_{2 d, r}^{-\frac{3}{2}}(\R^d) \times\Big(\dot{B}_{2 d, r}^{-\frac{1}{2}}(\R^d)\Big)^{d}$ due to the discontinuity of the solution map at zero.
\end{remark}
\begin{remark}
Theorem \ref{th3} enriches the ill-posedness theories of the system \eqref{che} although our discontinuity of the solution map in Theorem \ref{th3} is weaker than ``Norm Inflation" of \cite{n1,n2,x}.
\end{remark}
\begin{remark}
We should emphasize that the only question left is the well/ill-posedness of the system \eqref{che} in $\dot{B}_{4, 2}^{-\frac{3}{2}}(\R^2) \times\Big(\dot{B}_{4, 2}^{-\frac{1}{2}}(\R^2)\Big)^{2}$.
\end{remark}

\subsection{Main Idea}
\quad
From $\eqref{che}_2$, one has
\bal\label{v0}
v(t,x)&=v_0+\int_0^t\nabla u\dd\tau.
\end{align}
By the Duhamel formula, we obtain from $\eqref{che}_1$ that
\begin{align}
u(t, x)&=e^{t \Delta} u_{0}+\int_{0}^{t} e^{(t-s) \Delta}\D \left(uv \right)\mathrm{d} s\nonumber\\
&=\underbrace{e^{t \Delta} u_{0}}_{=:U_1}+\underbrace{\int_{0}^{t} e^{(t-s) \Delta}\D \left\{U_1\left(v_{0}+ \int_{0}^{s} \nabla U_1 \dd \tau\right) \right\}\mathrm{d}s}_{=:U_2}+\underbrace{\text{Remainder term}}_{=:U_3}.
\end{align}
Then, we decompose $u$ into three terms, namely, $u=U_1+U_2+U_3$. Thus
\bal
v(t,x)
&=\underbrace{v_0+\int_0^t\nabla U_1\dd\tau}_{=:V_1}+\underbrace{\int_0^t\nabla U_2\dd\tau}_{=:V_2}+\underbrace{\int_0^t\nabla U_3\dd\tau}_{=:V_3}.
\end{align}
For the convenience of using regularity estimate of heat equations later, we deduce that $U_i$ ($i=1,2,3$) solves the following three equations respectively,
\begin{equation}\label{1}
\begin{cases}
\pa_tU_1-\De U_1=0,\\
U_1|_{t=0}=u_0,
\end{cases}
\end{equation}
\begin{equation}\label{2}
\begin{cases}
\pa_tU_2-\De U_2=\D(U_1V_1),\\
U_2|_{t=0}=0,
\end{cases}
\end{equation}
and
\begin{equation}\label{3}
\begin{cases}
\pa_tU_3-\De U_3=\D\mathbf{F},\\
U_3|_{t=0}=0,
\end{cases}\end{equation}
where
\bbal
\mathbf{F}:=U_3V_3+U_3(V_2+V_1)+V_3(U_1+U_2)+U_1V_2+U_2(V_1+V_2).
\end{align*}

To make sure that $U_2$ leads to the discontinuity, we decompose it as
$$U_2=\underbrace{\int_{0}^{t} e^{(t-s) \Delta}\D (u_0v_{0})\mathrm{d}s}_{=:U_{2,1}}+\underbrace{\int_{0}^{t} e^{(t-s) \Delta}\D \left((U _1-u_0)v_{0}+U_1\int_{0}^{s} \nabla U_1 \dd \tau \right)\mathrm{d}s}_{=:U_{2,2}}.$$
We try to extract the worst term $U_{2,1}$. Our key argument is that, by constructing suitable initial data $(u_0,v_0)$, the other terms in $U_{2}$ can be absorbed by $U_{2,1}$ and the terms $U_1, U_3$ can be small. Precisely speaking, the term $U_{2,1}$ is the main contribution
to the discontinuity.
\section{Littlewood-Paley analysis}
\quad
Next, we will recall some facts about the Littlewood-Paley decomposition, the homogeneous Besov spaces and their some useful properties.

Choose a radial, non-negative, smooth function $\chi:\R^d\mapsto [0,1]$ such that it is supported in $\mathcal{B}:=\{\xi\in\mathbb{R}^d:|\xi|\leq 4/3\}$  and $\chi\equiv1$ for $|\xi|\leq3/4$. Setting $\varphi(\xi):=\chi(\xi/2)-\chi(\xi)$, then we deduce that $\varphi$ is supported in $\mathcal{C}:=\{\xi\in\mathbb{R}^d: 3/4\leq|\xi|\leq 8/3\}$. In particular, it holds that $\varphi(\xi)\equiv 1$ for $4/3\leq |\xi|\leq 3/2$ which will be used in the sequel.
For every $u\in \mathcal{S'}(\mathbb{R}^d)$, the homogeneous dyadic blocks ${\dot{\Delta}}_j$ is defined as follows
\bbal
&\dot{\Delta}_ju=\varphi(2^{-j}D)u=\mathcal{F}^{-1}\big(\varphi(2^{-j}\cdot)\mathcal{F}u\big)
=2^{dj}\int_{\R^d}\check{\varphi}\big(2^{j}(x-y)\big)u(y)\dd y,\quad \forall j\in \mathbb{Z}.
\end{align*}
In the homogeneous case, the following Littlewood-Paley decomposition makes sense
$$
u=\sum_{j\in \mathbb{Z}}\dot{\Delta}_ju\quad \text{for any}\;u\in \mathcal{S}'_h(\R^d),
$$
where $\mathcal{S}'_h$ is given by
\begin{eqnarray*}
\mathcal{S}'_h:=\Big\{u \in \mathcal{S'}(\mathbb{R}^{d}):\; \lim_{j\rightarrow-\infty}\|\chi(2^{-j}D)u\|_{L^{\infty}}=0 \Big\}.
\end{eqnarray*}
We turn to the definition of the Besov Spaces and norms which will come into play in our paper.
\begin{definition}[see \cite{B}]
Let $s\in\mathbb{R}$ and $(p,r)\in[1, \infty]^2$. The homogeneous Besov space $\dot{B}^s_{p,r}(\R^d)$ consists of all tempered distribution $f$ such that
\begin{align*}
\dot{B}_{p,r}^{s}=\Big\{f \in \mathcal{S}'_h(\mathbb{R}^{d}):\; \|f\|_{\dot{B}_{p,r}^{s}(\mathbb{R}^{d})}< \infty \Big\},
\end{align*}
where
\begin{numcases}{\|f\|_{\dot{B}^{s}_{p,r}(\R^d)}:=}
\left(\sum_{j\in\mathbb{Z}}2^{sjr}\|\dot{\Delta}_jf\|^r_{L^p(\R^d)}\right)^{1/r}, &if $1\leq r<\infty$,\nonumber\\
\sup_{j\in\mathbb{Z}}2^{sj}\|\dot{\Delta}_jf\|_{L^p(\R^d)}, &if $r=\infty$.\nonumber
\end{numcases}
\end{definition}

For $0<T \leq \infty, s \in \mathbb{R}$ and $1 \leq p, r, \rho \leq \infty$, we set (with the usual convention if $r=\infty$ )
$$
\|f\|_{\tilde{L}_{T}^{\rho}\left(\dot{B}_{p, r}^{s}\right)}:=\left(\sum_{j \in \mathbb{Z}} 2^{j s r}\left\|\dot{\Delta}_{j} f\right\|_{L^{\rho}\left(0, T ; L^{p}\right)}^{r}\right)^{1/r}.
$$

The following Bernstein's inequalities will be used in the sequel.
\begin{lemma}[see \cite{B}] \label{lem2.1} Let $\mathcal{B}$ be a ball and $\mathcal{C}$ be an annulus. There exists a constant $C>0$ such that for all $k\in \mathbb{N}\cup \{0\}$, any positive real number $\lambda$ and any function $f\in L^p$ with $1\leq p \leq q \leq \infty$, we have
\begin{align*}
&{\rm{supp}}\widehat{f}\subset \lambda \mathcal{B}\;\Rightarrow\; \|\nabla^kf\|_{L^q}\leq C^{k+1}\lambda^{k+(\frac{d}{p}-\frac{d}{q})}\|f\|_{L^p},  \\
&{\rm{supp}}\widehat{f}\subset \lambda \mathcal{C}\;\Rightarrow\; C^{-k-1}\lambda^k\|f\|_{L^p} \leq \|\nabla^kf\|_{L^p} \leq C^{k+1}\lambda^k\|f\|_{L^p}.
\end{align*}
\end{lemma}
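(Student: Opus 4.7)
The plan is to prove both parts by standard Fourier-analytic convolution arguments exploiting the frequency localization of $f$. I would fix, once and for all, two auxiliary bump functions: a radial $\phi\in C_c^\infty(\R^d)$ with $\phi\equiv 1$ on $\mathcal{B}$ and supported in a slightly enlarged ball, and a radial $\psi\in C_c^\infty(\R^d\setminus\{0\})$ with $\psi\equiv 1$ on $\mathcal{C}$ and supported in a slightly enlarged annulus. The hypotheses $\mathrm{supp}\,\widehat{f}\subset\la\mathcal{B}$ and $\mathrm{supp}\,\widehat{f}\subset\la\mathcal{C}$ then yield the identities $\widehat{f}(\xi)=\phi(\xi/\la)\widehat{f}(\xi)$ and $\widehat{f}(\xi)=\psi(\xi/\la)\widehat{f}(\xi)$ respectively, which are the starting point for both inequalities.

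For the upper bound in the ball case, for each multi-index $\alpha$ with $|\alpha|=k$ I would write $\pa^\alpha f=h_{\la,\alpha}\ast f$ where $h_{\la,\alpha}:=\mathcal{F}^{-1}[(i\xi)^\alpha\phi(\xi/\la)]$ satisfies $h_{\la,\alpha}(x)=\la^{d+k}g_\alpha(\la x)$ with $g_\alpha:=\mathcal{F}^{-1}[(i\xi)^\alpha\phi(\xi)]\in\mathcal{S}(\R^d)$. Young's convolution inequality with exponents determined by $1/s=1-1/p+1/q\in[0,1]$ then yields $\|\pa^\alpha f\|_{L^q}\le\|h_{\la,\alpha}\|_{L^s}\|f\|_{L^p}$, and the scaling identity $\|h_{\la,\alpha}\|_{L^s}=\la^{k+d/p-d/q}\|g_\alpha\|_{L^s}$ gives the first Bernstein inequality once I verify $\|g_\alpha\|_{L^s}\le C^{k+1}$ uniformly in $\alpha$. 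The upper half of the annulus inequality is proved identically, with $p=q$ and $\psi$ in place of $\phi$.

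The lower bound in the annulus case requires a small Fourier-side inversion trick. Because $\psi$ vanishes near the origin, the function $|\xi|^{-2k}\psi(\xi)$ is smooth and compactly supported, and the multinomial identity $|\xi|^{2k}=\sum_{|\alpha|=k}\tfrac{k!}{\alpha!}\xi^{2\alpha}$ lets me factor $\psi(\xi)=\sum_{|\alpha|=k}c_\alpha m_\alpha(\xi)(i\xi)^\alpha$ with each $m_\alpha\in C_c^\infty(\R^d)$. Substituting into $\widehat{f}(\xi)=\psi(\xi/\la)\widehat{f}(\xi)$ and using $(i\xi)^\alpha\widehat{f}=\widehat{\pa^\alpha f}$ produces the reproducing formula $f=\sum_{|\alpha|=k}c_\alpha\la^{-k}\bigl(\la^d\check m_\alpha(\la\,\cdot)\bigr)\ast\pa^\alpha f$, and Young's inequality in $L^p$ combined with $\|\check m_\alpha\|_{L^1}\le C^{k+1}$ delivers $\la^k\|f\|_{L^p}\le C^{k+1}\|\na^k f\|_{L^p}$.

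The main obstacle is keeping all constants of the form $C^{k+1}$ rather than letting them blow up like $k!$. I plan to handle this through a standard weighted $L^\infty$ estimate: picking an integer $N>d/2$, one has $\|\check m_\alpha\|_{L^1}\le C_N\sup_{|\beta|\le 2N}\|\pa^\beta m_\alpha\|_{L^\infty}$ on the fixed compact support of $\psi$, which reduces the task to bounding derivatives of $m_\alpha$. Since $m_\alpha$ depends on $\alpha$ only through a polynomial factor $\xi^\alpha/|\xi|^{2k}$ whose supremum on the annulus is of order $C^k$ and whose derivatives grow at most geometrically on that fixed compact set, these bounds are of order $C^{k+1}$. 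The same argument controls $\|g_\alpha\|_{L^s}$ in the ball case, completing the proof of Lemma~\ref{lem2.1}.
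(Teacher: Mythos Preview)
The paper does not supply its own proof of this lemma: it is quoted verbatim from Bahouri--Chemin--Danchin~\cite{B} and used as a black box throughout. So there is nothing in the paper to compare against beyond the citation itself.

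Your plan is exactly the standard argument given in~\cite{B}: reproduce $f$ (resp.\ $\pa^\alpha f$) as a convolution against a rescaled Schwartz kernel built from a cutoff equal to~$1$ on the ball/annulus, then apply Young's inequality and track the $\la$-scaling. The inversion trick via $|\xi|^{2k}=\sum_{|\alpha|=k}\tfrac{k!}{\alpha!}\xi^{2\alpha}$ for the reverse annulus inequality is also the textbook device. One small point worth tightening: in the line ``Young's inequality \ldots\ combined with $\|\check m_\alpha\|_{L^1}\le C^{k+1}$ delivers $\la^k\|f\|_{L^p}\le C^{k+1}\|\na^k f\|_{L^p}$'' you are silently absorbing both the multinomial coefficients $|c_\alpha|=k!/\alpha!$ and the sum over $|\alpha|=k$; this is harmless since $\sum_{|\alpha|=k}k!/\alpha!=d^{\,k}$, but it would be cleaner to say so explicitly. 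Likewise, your claim that derivatives of $\xi^\alpha|\xi|^{-2k}\psi(\xi)$ grow only geometrically in $k$ on the fixed annulus is correct (each differentiation brings down at most a factor $\sim k$, and you take only a fixed number $2N$ of them, giving a polynomial $k^{2N}$ that is absorbed into $C^{k+1}$), but spelling out that one-line count would remove the only place where a reader might suspect a hidden $k!$.
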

As a direct result of Bernstein's inequalities, we have the following continuous embedding:
\begin{lemma}[see \cite{B}]
Let $s\in\R$, $1 \leq p_{1} \leq p_{2} \leq \infty$ and $1 \leq r_{1} \leq r_{2} \leq \infty$. Then $$\dot{B}_{p_{1}, r_{1}}^{s}(\R^d)\hookrightarrow\dot{B}_{p_{2}, r_{2}}^{t}(\R^d)\quad\text{with}\quad t=s-\left(\frac{d}{p_{1}}-\frac{d}{p_{2}}\right).$$
\end{lemma}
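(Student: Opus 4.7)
The plan is to prove the embedding by working blockwise in frequency: each dyadic block $\dot{\Delta}_j f$ has Fourier support in the annulus $2^j\mathcal{C}$, so Bernstein's inequality (the first statement in Lemma \ref{lem2.1}, applied at the frequency scale $\lambda\sim 2^j$) upgrades the integrability from $L^{p_1}$ to $L^{p_2}$ at the cost of the scaling factor $2^{jd(1/p_1-1/p_2)}$. That is the only analytic input; the rest is rearranging norms and using a trivial sequence-space inclusion.

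First, I would write, for every $j\in\Z$,
$$
\|\dot{\Delta}_j f\|_{L^{p_2}}\;\leq\; C\, 2^{jd\left(\frac{1}{p_1}-\frac{1}{p_2}\right)}\,\|\dot{\Delta}_j f\|_{L^{p_1}},
$$
using the Bernstein inequality for spectrally localized functions. Multiplying both sides by $2^{jt}$ with $t=s-d(1/p_1-1/p_2)$ produces
$$
2^{jt}\|\dot{\Delta}_j f\|_{L^{p_2}}\;\leq\; C\,2^{js}\|\dot{\Delta}_j f\|_{L^{p_1}}.
$$
Thus the $j$-indexed sequence governing the $\dot{B}^{t}_{p_2,\cdot}$ norm is, pointwise in $j$, dominated by the $j$-indexed sequence governing the $\dot{B}^{s}_{p_1,\cdot}$ norm.

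Second, I would take $\ell^{r}$-norms in $j$. Since $1\leq r_1\leq r_2\leq\infty$, the standard nesting $\ell^{r_1}(\Z)\hookrightarrow \ell^{r_2}(\Z)$ with constant $1$ yields
$$
\Bigl(\sum_{j\in\Z}\bigl(2^{jt}\|\dot{\Delta}_jf\|_{L^{p_2}}\bigr)^{r_2}\Bigr)^{1/r_2}\;\leq\;\Bigl(\sum_{j\in\Z}\bigl(2^{jt}\|\dot{\Delta}_jf\|_{L^{p_2}}\bigr)^{r_1}\Bigr)^{1/r_1},
$$
with the obvious $\sup$ interpretation when $r_2=\infty$. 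Combining with the blockwise bound above gives $\|f\|_{\dot{B}^{t}_{p_2,r_2}}\leq C\,\|f\|_{\dot{B}^{s}_{p_1,r_1}}$, which is the desired continuous inclusion.

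There is no genuine obstacle here; the only things to be a touch careful about are (i) verifying that $f\in\mathcal{S}'_h$ so that the homogeneous Littlewood-Paley decomposition is meaningful in the target space as well (which is immediate because $\mathcal{S}'_h$ is defined independently of the exponents), and (ii) handling the cases $r_1$ or $r_2$ equal to $\infty$ with the appropriate $\sup$ convention. The main ideas are thus just Bernstein's inequality plus monotonicity of $\ell^r$ norms, applied after absorbing the scaling loss into the regularity index.
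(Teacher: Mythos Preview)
Your proof is correct and matches the approach the paper indicates: the paper does not give its own proof but cites \cite{B} and remarks that the embedding is ``a direct result of Bernstein's inequalities,'' which is exactly what you do---apply Lemma~\ref{lem2.1} blockwise and then use $\ell^{r_1}\hookrightarrow\ell^{r_2}$.
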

\begin{lemma}[see \cite{B}]\label{p}
Let $s>0,1 \leq p \leq \infty$ and $1 \leq \rho, \rho_{1}, \rho_{2}, \rho_{3}, \rho_{4} \leq \infty$. Then
$$
\|fg\|_{\LL_{T}^{\rho}\left(\dot{B}_{p, r}^{s}\right)} \leq C\left(\|f\|_{L_{T}^{\rho_{1}}\left(L^{\infty}\right)}\|g\|_{\LL_{T}^{\rho_{2}}\left(\dot{B}_{p, r}^{s}\right)}+\|g\|_{L_{T}^{\rho_{3}}\left(L^{\infty}\right)}\|g\|_{\LL_{T}^{\rho_{4}}\left(\dot{B}_{p, r}^{s}\right)}\right),
$$
where
$$
\frac{1}{\rho}=\frac{1}{\rho_{1}}+\frac{1}{\rho_{2}}=\frac{1}{\rho_{3}}+\frac{1}{\rho_{4}}.
$$
\end{lemma}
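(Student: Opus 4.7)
The plan is to build on the decomposition introduced in Section 1.2, writing $u=U_1+U_2+U_3$, $v=V_1+V_2+V_3$, and $U_2=U_{2,1}+U_{2,2}$. The proof reduces to exhibiting a sequence $(u_{0,n},v_{0,n})$ for which (a) $\|u_{0,n}\|_{\dot B^{-3/2}_{2d,r}}+\|v_{0,n}\|_{\dot B^{-1/2}_{2d,r}}\to 0$; (b) $\|U_{2,1}(t_n)\|_{\dot B^{-3/2}_{2d,r}}\ge c(\ep)>0$ at $t_n=\ep 2^{-2n}$; and (c) the remaining pieces $U_1$, $V_1$, $U_{2,2}$, $U_3$, $V_3$ all tend to $0$ at $t=t_n$ in the relevant Besov norm. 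A triangle inequality then yields
\[
\|u(t_n)\|_{\dot B^{-3/2}_{2d,r}}+\|v(t_n)\|_{\dot B^{-1/2}_{2d,r}}\ge \|U_{2,1}(t_n)\|_{\dot B^{-3/2}_{2d,r}}-o(1)\ge c(\ep)/2.
\]

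\textbf{Choice of initial data.} Exploiting the condition $1\le r<d$, I would spread the initial data over $n$ distinct dyadic packets concentrated near frequency $2^n$, with phases arranged so that a particular diagonal interaction in the product $u_{0,n}\otimes v_{0,n}$ produces an output localized in a \emph{fixed} low-frequency annulus independent of $n$. Schematically,
\[
u_{0,n}(x)=\alpha_n\sum_{k=1}^n e^{ix\cdot\xi_k}\phi(x),\qquad v_{0,n}(x)=\ee\,\beta_n\sum_{k=1}^n e^{ix\cdot\eta_k}\psi(x),
\]
where $\phi,\psi$ are fixed Schwartz bumps, $|\xi_k|,|\eta_k|\sim 2^n$, the sets $\{\xi_k\}$ and $\{\eta_k\}$ are mutually $2^n$-separated (so the homogeneous Besov $\ell^r$-sum decouples into $n$ contributions, producing a factor $n^{1/r}$), and each pair $\xi_k-\eta_k$ lies in a common low-frequency annulus independent of $n$. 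The constants $\alpha_n,\beta_n$ are tuned so that $\|u_{0,n}\|_{\dot B^{-3/2}_{2d,r}}\sim \alpha_n n^{1/r}2^{-3n/2}$ and $\|v_{0,n}\|_{\dot B^{-1/2}_{2d,r}}\sim \beta_n n^{1/r}2^{-n/2}$ both tend to $0$, while the diagonal bilinear contribution, being a coherent sum of $n$ disjointly supported $L^{2d}$ bumps, has size $\gtrsim\alpha_n\beta_n n^{1/(2d)}$.

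\textbf{Lower bound for $U_{2,1}(t_n)$ and control of the errors.} At $t_n=\ep 2^{-2n}$, the heat semigroup $e^{(t_n-s)\Delta}$ acts as essentially the identity on the fixed low-frequency annulus where the diagonal part of $u_{0,n}v_{0,n}$ lives (since $t_n|\xi|^2=O(\ep)$ there), hence
\[
U_{2,1}(t_n)\approx t_n\,\D(u_{0,n}v_{0,n}).
\]
A Fourier-side calculation at the resonant block $j_0=O(1)$ yields $\|U_{2,1}(t_n)\|_{\dot B^{-3/2}_{2d,r}}\gtrsim \ep\,\alpha_n\beta_n n^{1/(2d)}$. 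The free parameters $\alpha_n,\beta_n$ are then tuned so that this product stays of order one while the initial Besov norms, which scale like $\alpha_n n^{1/r}$ and $\beta_n n^{1/r}$ (times fixed negative powers of $2^n$), vanish. The remaining pieces are absorbed via Lemma \ref{p} and heat estimates in the Chemin--Lerner spaces $\LL^{\rho}_T(\dot B^{s}_{p,r})$: $U_1,V_1$ inherit the smallness of the initial data; $U_{2,2}$ gains a positive power of $t_n$ from either the heat smoothing of $U_1-u_0$ or the time integral on $\nabla U_1$; and $U_3,V_3$, solving \eqref{3} with a source $\mathbf F$ that is quadratic in already-small quantities, are controlled by a standard fixed-point argument in $\LL^\infty_{t_n}(\dot B^{-3/2}_{2d,r})\times \LL^\infty_{t_n}(\dot B^{-1/2}_{2d,r})$.

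The core obstacle is the tight bookkeeping: the chosen sequence must simultaneously vanish in the critical Besov norm, preserve the resonant low-frequency bilinear interaction, and be small enough that every quadratic or higher-order term in the Duhamel expansion is truly negligible compared with $U_{2,1}$. The sharpness of the condition $1\le r<d$ enters precisely through the mismatch between the $n^{1/r}$ factor in the $\ell^r$-Besov summation and the $n^{1/(2d)}$ (respectively $n^{1/d}$, after squaring) factor arising from $L^{2d}$-summation of the disjoint diagonal packets: when $r<d$ the ratio ``bad term over initial norm'' can be driven to infinity, whereas $r\ge d$ obstructs the construction, which is consistent with the open case $r=d=2$ highlighted in Remark 1.3.
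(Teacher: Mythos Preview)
Your proposal does not address the stated lemma at all. The statement you were asked to prove is the Chemin--Lerner product estimate
\[
\|fg\|_{\LL_T^{\rho}(\dot B^{s}_{p,r})}\le C\Big(\|f\|_{L_T^{\rho_1}(L^\infty)}\|g\|_{\LL_T^{\rho_2}(\dot B^{s}_{p,r})}+\|g\|_{L_T^{\rho_3}(L^\infty)}\|f\|_{\LL_T^{\rho_4}(\dot B^{s}_{p,r})}\Big),
\]
which is a standalone functional-analytic fact cited from \cite{B}; the paper does not prove it but simply invokes it. What you have written is instead a sketch of the proof of Theorem~\ref{th3} (the ill-posedness result): construction of oscillatory initial data, the decomposition $u=U_1+U_2+U_3$, the resonant lower bound on $U_{2,1}$, and the absorption of remainders. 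None of this constitutes an argument for the product inequality above.

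For the record, the actual proof of the lemma goes via Bony's paraproduct decomposition $fg=T_fg+T_gf+R(f,g)$: for $s>0$ one bounds $\|\dot\Delta_j(T_fg)\|_{L^p}\lesssim \|f\|_{L^\infty}\|\dot\Delta_jg\|_{L^p}$ (up to neighboring blocks), handles $T_gf$ symmetrically, and controls the remainder by $\|f\|_{L^\infty}\|g\|_{\dot B^s_{p,r}}$ using that $s>0$ makes the double sum over high--high interactions convergent. Passing to the time-integrated $\LL_T^\rho$ norms is then just H\"older's inequality in $t$ with the stated exponent relations $1/\rho=1/\rho_1+1/\rho_2=1/\rho_3+1/\rho_4$. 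Your write-up contains none of these ingredients, so as a proof of the stated lemma it is vacuous.
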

\begin{lemma}[see \cite{n1}]\label{n1}
Let $1\leq \rho,\rho_1,\rho_2\leq \infty$ with $\frac1\rho=\frac1\rho_1+\frac1\rho_2$ and $1\leq p<2d$. Then, we have
\bbal
\|fg\|_{\LL^\rho_T(\B^{\frac dp-1}_{p,1})}\leq C\|f\|_{\LL^{\rho_1}_T(\B^{\frac dp-1}_{p,1})}\|g\|_{\LL^{\rho_2}_T(\B^{\frac dp}_{p,1})}.
\end{align*}
\end{lemma}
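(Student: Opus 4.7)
Following the Main Idea, I decompose $u=U_1+U_2+U_3$ with $U_2=U_{2,1}+U_{2,2}$, and $v=V_1+V_2+V_3$, then construct a sequence $\{(u_{0,n},v_{0,n})\}$ whose Besov norms tend to $0$ such that, at $t_n=\ep 2^{-2n}$, the bilinear term $U_{2,1}(t_n)$ is bounded below in $\B^{-3/2}_{2d,r}$ while $U_1,U_{2,2},U_3,V_2,V_3$ are all $o_n(1)$ in the corresponding Besov spaces; the triangle inequality then yields the desired lower bound for $u(t_n)$ and $v(t_n)$.

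\textbf{Construction and lower bound on $U_{2,1}$.}
I would take the data as a lacunary sum of $n$ spectrally localized wavepackets. Fix a real Schwartz bump $\varphi$ with $\widehat\varphi$ supported in a small ball around the origin, a unit direction $\vec e$, and scales $n_k\in\{n,n+1,\ldots,2n-1\}$ with corresponding frequencies $\xi_k=2^{n_k}\vec e$; with a small parameter $\eta_n$, set
\[
u_{0,n}(x)=\eta_n\sum_{k=1}^n 2^{3n_k/2}\cos(\xi_k\cdot x)\varphi(x),\qquad
v_{0,n}(x)=\eta_n\sum_{k=1}^n 2^{n_k/2}\cos(\xi_k\cdot x)\varphi(x)\vec e.
\]
Since the $\xi_k$ live in pairwise disjoint dyadic annuli, a Littlewood--Paley computation gives
$\|u_{0,n}\|_{\B^{-3/2}_{2d,r}}+\|v_{0,n}\|_{\B^{-1/2}_{2d,r}}\sim \eta_n\,n^{1/r}\longrightarrow 0$
provided $\eta_n=o(n^{-1/r})$. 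The diagonal products $\cos^2(\xi_k\cdot x)=\tfrac12+\tfrac12\cos(2\xi_k\cdot x)$ inject a common low-frequency profile $\tfrac12\varphi^2\vec e$ at amplitude $\eta_n^2 2^{2n_k}$ per $k$, while the off-diagonal products ($k\ne k'$) stay at frequencies $\gtrsim 2^n$ and are invisible to $\dot\Delta_0$. Since $t_n\cdot 2^{2\cdot 0}\ll 1$, the heat semigroup acts essentially as the identity on $\dot\Delta_0$, giving
\[
\|U_{2,1}(t_n)\|_{\B^{-3/2}_{2d,r}}\gtrsim\|\dot\Delta_0 U_{2,1}(t_n)\|_{L^{2d}}\gtrsim t_n\,\eta_n^2\sum_{k=1}^n 2^{2n_k}\sim\ep\,\eta_n^2\,2^{2n}\ge c(\ep),
\]
as soon as $\eta_n\gtrsim C(\ep)\,2^{-n}$; the window $C(\ep)\,2^{-n}\le\eta_n\ll n^{-1/r}$ is non-empty for $n$ large.

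\textbf{Smallness of $U_1,U_{2,2},U_3$, and the main obstacle.}
Trivially $\|U_1(t_n)\|_{\B^{-3/2}_{2d,r}}\le\|u_{0,n}\|_{\B^{-3/2}_{2d,r}}\to 0$. For $U_{2,2}$, each of its two summands carries a smallness factor: the piece with $U_1-u_0=\int_0^s\Delta U_1\,\dd\tau$ gains a factor $\min(s\cdot 2^{2n_k},1)\le\ep$ on the $n_k$-th Littlewood--Paley block, while the piece with $\int_0^s\nabla U_1\,\dd\tau$ is small by time integration of the linear heat flow; Lemma~\ref{p} together with maximal regularity for the heat equation then yield $\|U_{2,2}(t_n)\|_{\B^{-3/2}_{2d,r}}=o_n(1)$. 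The remainder $U_3$ solves \eqref{3} with zero initial data and only higher-order nonlinearities; by working in the Chemin--Lerner space $\tilde L^\infty_{t_n}(\B^{d/p-2}_{p,1})\cap\tilde L^1_{t_n}(\B^{d/p}_{p,1})$ for some $p\in(d,2d)$ (a range in which Nie--Yuan's local theory \cite{n1} applies and our data remain small), a standard fixed-point contraction based on Lemmas~\ref{p},\ref{n1} bootstraps $\|U_3(t_n)\|_{\B^{-3/2}_{2d,r}}=o_n(1)$; the coupled $V_2,V_3$ are handled analogously via $V_i=\int_0^t\nabla U_i\,\dd\tau$.

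The main technical obstacle is the simultaneous calibration of three competing scales: the $\ell^r$-summation loss $n^{1/r}$ in the data norm, the lacunary diagonal sum $\sum_k 2^{2n_k}\sim 2^{4n}$ driving the low-frequency bilinear interaction, and the Chemin--Lerner smallness required to close the contraction for $U_3$. The dimensional exponent $d$ enters through the Bernstein/embedding calculus used in the remainder bound, and it is this interplay that restricts the whole scheme to the range $1\le r<d$, consistent with the sharpness suggested by the open case $d=2$, $r=2$ noted in Remark~1.3.
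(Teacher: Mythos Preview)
Your proposal does not address the stated lemma at all. The statement you were asked to prove is Lemma~\ref{n1}, a bilinear product estimate
\[
\|fg\|_{\LL^\rho_T(\B^{\frac dp-1}_{p,1})}\leq C\|f\|_{\LL^{\rho_1}_T(\B^{\frac dp-1}_{p,1})}\|g\|_{\LL^{\rho_2}_T(\B^{\frac dp}_{p,1})},
\]
which in the paper is simply quoted from \cite{n1} without proof. A proof of this lemma would proceed via Bony's paraproduct decomposition $fg=T_fg+T_gf+R(f,g)$ in the Chemin--Lerner framework, using that $\frac dp-1>-\frac dp$ (equivalently $p<2d$) to control the remainder term. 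None of this appears in your write-up.

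What you have written is instead a sketch of the proof of the main ill-posedness result, Theorem~\ref{th3}. Even read in that light, your approach diverges from the paper's in a way that creates a genuine gap. The paper's construction places the $n$ wavepackets at \emph{widely separated spatial centers} $2^{2n+k}\ee$; this separation is what drives the crucial $L^p$ smallness in Lemma~\ref{lem1} (the factor $2^{(p-2d)n/(8p)}$), which in turn is indispensable for closing the remainder estimate on $U_3$ in Step~3. Your construction superimposes all wavepackets at the origin, so no analogous $L^p$ gain is available, and the quantitative smallness needed for the contraction on $U_3$ is not established. Moreover, the paper extracts the lower bound on $U_{2,1}$ at the \emph{intermediate} frequencies $\ell\in\mathbb{N}(n)$ (using the spatial separation again, via \eqref{k2}), whereas you attempt to read it off at $\dot\Delta_0$; the restriction $1\le r<d$ in the paper arises precisely from balancing the $\ell^r$ sum over $\mathbb{N}(n)$ against the $L^{2d}$ gain in Lemma~\ref{lem1}, and your scheme does not reproduce this mechanism.
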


\begin{lemma}[see \cite{n1}]\label{n2}
Let $1\leq \rho,\rho_1,\rho_2\leq \infty$ with $\frac1\rho=\frac1\rho_1+\frac1\rho_2$ and $d< p<2d\leq q < \infty$ with $\frac dp+\frac dq>1$. Then, we have
\bbal
\|fg\|_{\LL^\rho_T(\B^{\frac dp-1}_{p,1})}\leq C\|f\|_{\LL^{\rho_1}_T(\B^{\frac dp-1}_{p,1})}\|g\|_{\LL^{\rho_2}_T(\B^{\frac dq}_{q,1})}.
\end{align*}
\end{lemma}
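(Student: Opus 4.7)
The plan is to follow the decomposition outlined in the Main Idea section: split $u=U_1+U_2+U_3$, $v=V_1+V_2+V_3$, and further decompose $U_2=U_{2,1}+U_{2,2}$ with $U_{2,1}=\int_0^t e^{(t-s)\De}\D(u_0v_0)\dd s$. I would then construct a sequence $(u_{0,n},v_{0,n})$ of initial data whose Besov norms tend to zero, yet such that at the parabolic time scale $t_n=\ep2^{-2n}$ the term $U_{2,1}(t_n,\cdot)$ has $\B^{-3/2}_{2d,r}$-norm bounded below by $c(\ep)>0$ while every other piece tends to zero; the triangle inequality then yields the theorem.

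For the initial data I would use frequency-localised oscillations near dyadic scale $2^n$, schematically of the form $u_{0,n}(x)=A_n\,\phi(x)\cos(2^n x_1)$ and $v_{0,n}(x)=B_n\,\phi(x)\cos(2^n x_1)\ee_1$, with $\phi$ a fixed Schwartz bump and amplitudes tuned so that $A_n 2^{-3n/2}\to 0$ and $B_n 2^{-n/2}\to 0$. A more robust variant uses a superposition of $N_n\to\infty$ widely separated frequency-blocks at scales $2^{n_1}<2^{n_2}<\cdots$; this is exactly where the restriction $r<d$ enters, since the $\ell^r$-structure of the Besov norm absorbs the superposition while keeping the amplitude on each individual block large enough to drive a meaningful bilinear interaction. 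In either case, the pointwise product $u_{0,n}v_{0,n}$ carries a non-oscillating ``high--high to low'' component of size $\tfrac12 A_nB_n\phi^2(x)\ee_1$, whose low-frequency part is essentially unaffected by the heat flow over $[0,t_n]$.

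For the upper bounds, the heat estimate gives $\|U_1(t_n)\|_{\B^{-3/2}_{2d,r}}\le \|u_{0,n}\|_{\B^{-3/2}_{2d,r}}\to 0$, and after parabolic smoothing a similar bound holds for $V_1$. The auxiliary piece $U_{2,2}$ contains two contributions: $\int_0^t e^{(t-s)\De}\D((U_1-u_0)v_0)\dd s$, where $U_1-u_0=(e^{s\De}-I)u_0$ yields an extra factor $1-e^{-s2^{2n}}\le \ep$ at the oscillation scale (or of order $s$ at lower frequencies); and $\int_0^t e^{(t-s)\De}\D(U_1\!\int_0^s\na U_1\dd\tau)\dd s$, which is quadratic in $u_0$ and hence of order $\|u_{0,n}\|_{\B^{-3/2}_{2d,r}}^2\to 0$ by Lemmas \ref{p}--\ref{n2}. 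For the full nonlinear remainder $(U_3,V_3)$ solving \eqref{3}, I would run a fixed-point argument in a Chemin--Lerner space such as $\LL^\infty_{t_n}(\B^{-3/2}_{2d,r})\cap \LL^1_{t_n}(\B^{1/2}_{2d,r})$; every product in $\mathbf{F}$ contains at least one factor that is itself $o(1)$ in the relevant space-time norm, forcing $\|U_3\|+\|V_3\|=o(1)$.

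Finally, for the lower bound on $U_{2,1}$, I fix a low dyadic index $j_0$ (say $j_0=0$) and observe that $(t_n-s)2^{2j_0}\le\ep$ for all $s\in[0,t_n]$, so $e^{(t_n-s)\De}\DDe_{j_0}$ is essentially the identity; hence $\DDe_{j_0}U_{2,1}(t_n)\approx \tfrac12 t_n A_n B_n\,\pa_1\DDe_{j_0}(\phi^2)\ee_1$ plus a term with Fourier support near $|\xi|\sim 2^{n+1}$ that contributes negligibly to the $\B^{-3/2}_{2d,r}$-norm. With the amplitudes normalised so that $t_n A_nB_n\gtrsim \ep$, the principal term has $L^{2d}$-norm bounded below by a $\phi$-dependent constant, yielding $\|U_{2,1}(t_n)\|_{\B^{-3/2}_{2d,r}}\ge c(\ep)>0$. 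The main obstacle is the upper-bound step: specifically, proving that $U_{2,2}$ is strictly smaller than $U_{2,1}$ despite their shared bilinear structure in $(u_0,v_0)$, which requires carefully isolating the factor $(e^{s\De}-I)$ in $U_1-u_0$ and tracking the frequency localisations through the product estimates, while simultaneously ensuring that the initial-data construction respects the $\ell^r$-Besov structure with $r<d$.
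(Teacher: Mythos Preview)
Your proposal does not address the stated lemma at all. The statement you were asked to prove is Lemma~\ref{n2}, a bilinear product estimate in Chemin--Lerner spaces of the form
\[
\|fg\|_{\LL^\rho_T(\B^{d/p-1}_{p,1})}\leq C\|f\|_{\LL^{\rho_1}_T(\B^{d/p-1}_{p,1})}\|g\|_{\LL^{\rho_2}_T(\B^{d/q}_{q,1})},
\]
whereas what you have written is an outline of the proof of Theorem~\ref{th3}, the main ill-posedness result. You even invoke ``Lemmas~\ref{p}--\ref{n2}'' as tools inside your argument, which confirms you are using the lemma rather than proving it.

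As for the paper's own treatment of this statement: there is no proof given. The lemma is quoted from~\cite{n1} (Nie--Yuan), and the authors rely on it as an external input. A genuine proof would proceed via the Bony paraproduct decomposition $fg=T_fg+T_gf+R(f,g)$, bounding each piece using Bernstein's inequality and the embedding $\B^{d/q}_{q,1}\hookrightarrow L^\infty$; the condition $d/p+d/q>1$ is precisely what controls the remainder term $R(f,g)$ in $\B^{d/p-1}_{p,1}$. None of this appears in your write-up. If your intention was to prove Theorem~\ref{th3}, you should say so explicitly; as a proof of Lemma~\ref{n2} the proposal is simply off-target.
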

Finally, we recall the regularity estimates for the heat equations.
\begin{lemma}[see \cite{B}]\label{rg}
Let $s\in \mathbb{R}$, $1\leq p,r\leq \infty$  and $1\leq q_1\leq q_2\leq \infty$. Assume that $u_0\in \dot{B}^s_{p,r}$ and $f\in {\LL}^{q_1}_T(\dot{B}^{s+\frac{2}{q_1}-2}_{p,r})$. Then the heat equations
\begin{align*}
\left\{\begin{array}{ll}
\partial_tu-\Delta u=f,\\
 u(0,x)=u_0(x),
\end{array}\right.
\end{align*}
has a unique solution $u\in \LL^{q_2}_T(\dot{B}^{s+\fr{2}{q_2}}_{p,r}))$ satisfying for all $T>0$
\begin{align*}
\|u\|_{\LL^{q_2}_T(\dot{B}^{s+\fr{2}{q_2}}_{p,r})}\leq C\left(\|u_0\|_{\dot{B}^s_{p,r}}+\|f\|_{{\LL}^{q_1}_T(\dot{B}^{s+\frac{2}{q_1}-2}_{p,r})}\right).
\end{align*}
\end{lemma}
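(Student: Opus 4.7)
My plan is to follow the decomposition $u = U_1 + U_{2,1} + U_{2,2} + U_3$ outlined in the ``Main Idea'' subsection, construct a carefully tailored sequence of initial data so that the main contribution to the solution comes from $U_{2,1}$, and control all other pieces by means of the heat‑regularity estimate in Lemma~\ref{rg} together with the product estimates in Lemmas~\ref{p}, \ref{n1}, \ref{n2}. The initial data will be a lacunary sum of high‑frequency wave packets of the rough form
\begin{equation*}
u_{0,n}(x)=\sum_{k=1}^{n}\alpha_{k}\,2^{\frac{3m_{k}}{2}}\phi(x)\cos(2^{m_{k}}x_{1}),\qquad v_{0,n}(x)=\sum_{k=1}^{n}\beta_{k}\,2^{\frac{m_{k}}{2}}\phi(x)\cos\bigl((2^{m_{k}}+2^{q_{k}})x_{1}\bigr)\vec{e}_{1},
\end{equation*}
with $\phi$ a fixed Schwartz bump, the scales $m_{k}$ chosen lacunary and spread out, and the shift frequencies $2^{q_{k}}$ chosen so that each self‑interaction $\cos(2^{m_{k}}x_{1})\cos((2^{m_{k}}+2^{q_{k}})x_{1})$ produces a distinct low‑frequency ``beat'' at frequency $2^{q_{k}}$. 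The amplitudes $\alpha_{k},\beta_{k}$ will be chosen to scale like negative powers of $n$ so that both $\|u_{0,n}\|_{\dot B^{-3/2}_{2d,r}}$ and $\|v_{0,n}\|_{\dot B^{-1/2}_{2d,r}}$, which are essentially $\ell^{r}$ sums of $\alpha_{k}$'s and $\beta_{k}$'s respectively, tend to zero as $n\to\infty$; this is exactly where the hypothesis $r<d$ enters, since it gives the margin to absorb the factor one pays for localising $\phi$ in $L^{2d}$.

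The computational heart of the argument is the lower bound on $U_{2,1}(t_{n})$ at the time $t_{n}=\varepsilon 2^{-2m_{n}}$ (where $m_{n}$ is the largest scale used). Using
\begin{equation*}
U_{2,1}(t_{n})=\int_{0}^{t_{n}}e^{(t_{n}-s)\Delta}\,\mathrm{div}(u_{0,n}v_{0,n})\,\mathrm{d}s,
\end{equation*}
I expand $u_{0,n}v_{0,n}$ trigonometrically, keep only the self‑interaction terms landing at frequency $2^{q_{k}}$, observe that on these scales the heat semigroup is essentially the identity over the time interval $[0,t_{n}]$ (because $t_{n}2^{2q_{k}}\leq\varepsilon$), and deduce
\begin{equation*}
\|U_{2,1}(t_{n})\|_{\dot B^{-3/2}_{2d,r}}^{r}\gtrsim\sum_{k}\bigl(t_{n}\,\alpha_{k}\beta_{k}\,2^{2m_{k}}2^{-q_{k}/2}\bigr)^{r}\gtrsim c(\varepsilon)^{r},
\end{equation*}
the last inequality being a matter of tuning $\alpha_{k}\beta_{k}2^{2m_{k}}\sim\varepsilon^{-1}2^{2m_{n}}n^{-1/r}$ and spreading the $q_{k}$'s lacunary. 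The cross‑interactions $(j\neq k)$ are supported at high frequencies $\sim 2^{m_{\max}}$, and together with the diagonal high beats $\sim 2^{m_{k}+1}$ they contribute only negligibly after the Besov weight and the heat damping.

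For the remaining pieces I plan to use Lemma~\ref{rg} together with the product lemmas in a standard way. Writing $Y_{T}:=\tilde L^{\infty}_{T}(\dot B^{-3/2}_{2d,r})\cap\tilde L^{1}_{T}(\dot B^{1/2}_{2d,r})$ for $u$ and an analogous norm for $v$, Lemma~\ref{rg} gives $\|U_{1}\|_{Y_{t_{n}}}\lesssim\|u_{0,n}\|_{\dot B^{-3/2}_{2d,r}}\to 0$ and $\|V_{1}\|\lesssim\|v_{0,n}\|_{\dot B^{-1/2}_{2d,r}}+\|U_{1}\|\to 0$. For $U_{2,2}$, the integrand $(U_{1}-u_{0,n})v_{0,n}+U_{1}\int_{0}^{s}\nabla U_{1}\,\mathrm{d}\tau$ is a genuine product of two small pieces, so Lemma~\ref{p} or Lemmas~\ref{n1}--\ref{n2} (here used with $p=2d$, which is exactly the endpoint they allow) yield a quantity vanishing as $n\to\infty$. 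For $U_{3}$, I set up a contraction/continuation argument on $Y_{t_{n}}$: the right‑hand side $\mathbf{F}$ is at least quadratic in $(U_{3},V_{3})$ plus a product of one small and one controlled factor, so a standard bootstrap shows that $U_{3}$ stays of higher order in the smallness parameter. The conclusion then follows from the triangle inequality $\|u(t_{n})\|_{\dot B^{-3/2}_{2d,r}}\geq\|U_{2,1}(t_{n})\|_{\dot B^{-3/2}_{2d,r}}-\|U_{1}+U_{2,2}+U_{3}\|$.

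The main obstacle, as I see it, is the design of the initial data: balancing the $\ell^{r}$ cancellation required to make the initial norm tend to zero against the $\ell^{r}$ reinforcement needed to keep $U_{2,1}(t_{n})$ bounded below — this is essentially the whole content of the restriction $r<d$, and the choice of the lacunary shifts $q_{k}$ has to be fine‑tuned so that each beat lands in a distinct dyadic ring. A secondary technical annoyance is that $p=2d$ is the borderline case of Lemmas~\ref{n1}--\ref{n2}, so the bilinear estimates for $U_{2,2}$ and $U_{3}$ must be applied with care, most likely in the mixed time‑space Chemin--Lerner spaces $\tilde L^{\rho}_{T}(\dot B^{s}_{2d,r})$ rather than in the usual $L^{\rho}_{T}$ spaces.
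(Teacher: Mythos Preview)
Your proposal does not address the statement in question. Lemma~\ref{rg} is the standard smoothing estimate for the linear heat equation in Chemin--Lerner spaces; the paper does not prove it but simply cites it from \cite{B}. A proof would proceed by applying $\dot\Delta_j$ to the Duhamel formula, using the localized semigroup bound $\|\dot\Delta_j e^{t\Delta}g\|_{L^p}\leq Ce^{-c2^{2j}t}\|\dot\Delta_j g\|_{L^p}$, integrating in time, and summing in $j$ via Young's inequality in $L^{q_1}\ast L^{1}\hookrightarrow L^{q_2}$. None of this appears in your write-up.

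What you have sketched is instead an attempt at the main result, Theorem~\ref{th3}. Even read that way, there is a structural discrepancy with the paper's argument that would likely cause trouble. Your initial data use a single fixed bump $\phi(x)$ modulated at lacunary frequencies $2^{m_k}$, with distinct shift frequencies $2^{q_k}$ to separate the low-frequency beats. The paper's construction is rather different: it takes a \emph{single} high modulation frequency $\sim 2^{n}$ but \emph{spatially translates} the bumps to disjoint centers $2^{2n+k}\vec e$, so that $f_n=n^{-1/(2r)}\sum_{k}2^{k/2}\phi\bigl(2^k(x-2^{2n+k}\vec e)\bigr)\sin(\tfrac{17}{12}2^n x_1)$. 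This spatial separation is what makes the $L^{p}$ estimate (their Lemma~3.1) work --- the cross terms in $|f_n|^p$ decay like $2^{-Mn}$ because the bumps live far apart --- and it is also what kills the off-diagonal piece $K_2$ in the lower bound for $U_{2,1}$. With your fixed-bump construction the cross terms $\phi(x)^2\cos(2^{m_j}x_1)\cos((2^{m_k}+2^{q_k})x_1)$ for $j\neq k$ are not spatially separated and may well land in the same dyadic shells as your intended beats, so the claimed orthogonality of the low-frequency outputs is not obvious. You would need an additional mechanism (e.g.\ careful placement of the $q_k$'s relative to all the $m_j$'s) to guarantee this, and you have not supplied one.

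A second issue: you propose to run the bilinear estimates for $U_{2,2}$ and $U_3$ directly at $p=2d$, but Lemmas~\ref{n1} and~\ref{n2} require $p<2d$ strictly. The paper handles this by working in an auxiliary scale $\dot B^{d/p_0-2}_{p_0,1}$ with $p_0<2d$ (specifically $p_0=2d-1$ for $d\geq 3$ and $p_0=7/2$ for $d=2$), closing the $U_3$ estimate there, and only afterwards embedding into $\dot B^{-3/2}_{2d,r}$. Without that detour your contraction argument for $U_3$ does not close.
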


\section{Proof of Theorem \ref{th3}}
\subsection{Construction of initial data}
Letting $n \gg 1,$ we write
$$n\in 16\mathbb{N}=\left\{16,32,48,\cdots\right\}\quad\text{and}\quad\mathbb{N}(n)=\left\{k\in 8\mathbb{N}: \frac{n}4 \leq k\leq \frac{n}2\right\},$$
Before constructing the initial data $(u_{0},v_{0})$, we need to introduce smooth, radial cut-off functions to localize the frequency region.
Let $\widehat{\theta}\in \mathcal{C}^\infty_0(\mathbb{R})$  be an even, real-valued function with values in $[0,1]$ and satisfy
\bbal
\widehat{\theta}(\xi)=
\bca
1, \quad \mathrm{if} \ |\xi|\leq \frac{1}{200d},\\
0, \quad \mathrm{if} \ |\xi|\geq \frac{1}{100d}.
\eca
\end{align*}
Let $\ee=(1,0,\cdots,0)$ and
$$\phi(x_1,x_2,\cdots,x_d)=\theta(x_1)\theta(x_2)\cdots\theta(x_d)
\sin\left(\frac{17}{24}x_d\right).$$
We define
\bal
f_n&\equiv n^{-\frac {1}{2r}}\sum\limits_{k\in \mathbb{N}(n)}2^{\frac k2}\phi\left(2^{k}(x-2^{2n+k}\ee)\right)
\sin\left(\frac{17}{12}2^{n}x_1\right).\label{b}
\end{align}
It is straightforward to verify that
\bal\label{con}
\mathrm{supp} \ \widehat{f_n}(\xi)&\subset  \left\{\xi\in\R^d: \ \frac{33}{24}2^{n}\leq |\xi|\leq \frac{35}{24}2^{n}\right\}.
\end{align}
We construct initial data $(u_{0,n},v_{0,n})$ as follows
\begin{equation}\label{h}
u_{0,n}= 2^{\frac32n}f_n  \quad\text{and}\quad v_{0,n}= 2^{\frac12n}f_n\ee.
\end{equation}

It is worth highlighting again the key step of our proof. As mentioned above, we first consider the strong solution $v$ by the integral form \eqref{v0}. By the linearized equation of $\eqref{che}_1$, we introduce the first approximation $U_1=e^{t\Delta}u_{0,n}$ of $u$. By using $(U_1,v)$, we extract the worst term $U_{2,1}$ of $u$ that primarily affects the discontinuity to the original solution as $n\to\infty$, namely, $U_{2,1}=\int_{0}^{t} e^{(t-s) \Delta}\D (u_{0,n}v_{0,n})\mathrm{d}s$. Naturally, we can construct the initial data $(u_{0,n},v_{0,n})$ by \eqref{h} and hope that $U_{2,1}=2^{2n}\int_{0}^{t} e^{(t-s) \Delta}\pa_{x_1} (f_n^2)\mathrm{d}s$ can lead to the discontinuity of the original solution.
Next we give some explanations to the construction of $f_n$. We can assume that $\widehat{\theta}(\xi)$ is ``good" such that $\theta$ is a real function. Introducing $\phi$ is to guarantee that $\mathcal{F}\big(\phi\big(2^{k}(x-2^{2n+k}\ee)\big)\big)$ is supported in the small dyadic $\{\xi: |\xi|\sim 2^k\}$, it follows that $\widehat{f_n}$ is supported in the big dyadic $\{\xi: |\xi|\sim 2^n\}$ which means that the frequency of $f_n$ is very high if $n$ is very large. The quadratic term $f_n^2$ will generate many high frequency terms (whose fourier transform is supported in different dyadic regions), see \eqref{sub}. To eliminate these high frequency terms as many as possible, the explicit coefficients are introduced. Lastly, we should mention that the translation transform is needed particularly in the proof of Lemma \ref{d2} and \eqref{k2} below.

\subsection{Estimation of initial data}

\begin{lemma}\label{lem1}
Let $f_n$ be defined by \eqref{b}. Then for $(p,r)\in[1, 2d]\times[1,d)$, there exists a positive constant $C$ independent of $n$ such that
\bal\label{d2}
\|f_n\|_{L^{p}}\leq C2^{\frac{p-2d}{8p}n}n^{\frac{1}{p}-\frac{1}{2r}}.
\end{align}
In particular, by $L^{7/2}=[L^{2},L^{4}]_{(1/7,6/7)}$, then \eqref{d2} holds for $d=2$ and $p=7/2$.
\end{lemma}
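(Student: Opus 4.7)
\textbf{Proof plan for Lemma \ref{lem1}.} The strategy is to exploit the fact that the individual summands making up $f_n$ have essentially disjoint spatial supports, so that the $L^p$ norm can be assembled term-by-term. Write $f_n = n^{-\frac{1}{2r}} \sum_{k \in \mathbb{N}(n)} g_k$ with
$$g_k(x) := 2^{k/2}\phi\bigl(2^k(x - 2^{2n+k}\ee)\bigr)\sin\bigl(\tfrac{17}{12}2^n x_1\bigr).$$
Each $g_k$ is concentrated (up to Schwartz tails) in a ball of radius $\sim 2^{-k}$ around the center $c_k := 2^{2n+k}\ee$. Since $k$ runs over $8\mathbb{N}\cap[n/4,n/2]$, the pairwise separation of centers is at least $|c_k - c_{k'}| \geq 2^{2n+\min(k,k')}(2^8-1)$, which is enormously larger than any of the bump widths $2^{-k}$. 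Consequently the pieces behave as if disjointly supported, and the super-polynomial decay of $\phi$ (which is Schwartz because $\widehat{\theta}\in\mathcal{C}^\infty_0$) controls any residual overlap.

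The proof then proceeds in three steps. (i) Using quasi-disjoint supports, one bounds
$$\|f_n\|_{L^p}^p \leq C\,n^{-\frac{p}{2r}} \sum_{k \in \mathbb{N}(n)} \|g_k\|_{L^p}^p + R_n,$$
where the remainder $R_n$ coming from the Schwartz tails of $\phi$ is of order $2^{-Nn}$ for every $N$, hence negligible compared to the claimed bound. (ii) A change of variable $y = 2^k(x-c_k)$ together with $|\sin|\leq 1$ gives $\|g_k\|_{L^p}^p \leq 2^{k(p/2-d)}\|\phi\|_{L^p}^p$. (iii) Since $|\mathbb{N}(n)|\leq n/32$ and the exponent $p/2-d\leq 0$ for $p\leq 2d$, each term is at most $2^{(n/4)(p/2-d)}=2^{n(p-2d)/8}$, so $\sum_{k \in \mathbb{N}(n)} 2^{k(p/2-d)} \leq C\,n\,2^{n(p-2d)/8}$. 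Combining (i)--(iii) yields exactly $\|f_n\|_{L^p}\leq C\,n^{1/p - 1/(2r)}\,2^{n(p-2d)/(8p)}$.

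For the borderline case $d=2$, $p=7/2$ singled out in the statement, I would instead apply the real interpolation inequality $\|f\|_{L^{7/2}} \leq C\|f\|_{L^2}^{1/7}\|f\|_{L^4}^{6/7}$ to the estimates already obtained at $p=2$ (contributing the dyadic factor $2^{-n/8}$ and polynomial factor $n^{1/2-1/(2r)}$) and at $p=4=2d$ (no dyadic factor, only $n^{1/4-1/(2r)}$). Combining exponents produces $2^{-n/56}\,n^{2/7-1/(2r)}$, matching the lemma. The main obstacle is step (i): because $\phi$ is merely Schwartz, not compactly supported, rigorously decoupling the summands requires splitting $\R^d$ into the disjoint balls $B_k := \{|x-c_k|\leq 2^{-k+1}\}$ and their complement, applying triangle inequality on each region, and then invoking the Schwartz decay of $\phi$ at scales $2^k \cdot 2^{2n+\min(k,k')}$ to absorb every cross and tail term into the $O(2^{-Nn})$ remainder. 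Because the center-separation $2^{2n}$ dwarfs the bump widths $2^{-k}$, this is bookkeeping rather than a substantive obstacle.
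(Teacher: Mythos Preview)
Your strategy is the same as the paper's: the summands $g_k$ are centered at points $c_k=2^{2n+k}\ee$ whose pairwise separation $\gtrsim 2^{2n}$ dwarfs every bump width, so the $L^p$ norm decouples into $\sum_k\|g_k\|_{L^p}^p$ up to a negligible remainder. The paper implements this slightly differently: it first restricts to $p\in\mathbb{Z}^+$, expands $|f_n|^p$ pointwise as a sum over $p$-tuples $(\ell_1,\ldots,\ell_p)\in\mathbb{N}(n)^p$, and splits into diagonal terms ($\ell_1=\cdots=\ell_p$, your step (ii)--(iii)) and off-diagonal terms, which are shown to be $O(2^{-Mn})$ by integrating the product of two bumps with distinct centers. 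The non-integer value $p=7/2$ is then recovered by interpolation, exactly as you propose. Your spatial-partition argument is a legitimate alternative that in principle works for every $p\in[1,2d]$ at once, so in your framework the interpolation step for $p=7/2$ is actually redundant.

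One point to correct: with your choice $B_k=\{|x-c_k|\le 2^{-k+1}\}$ the remainder is \emph{not} $O(2^{-Nn})$. The tail of $g_k$ itself on $\R^d\setminus B_k$ is $\|g_k\|_{L^p(|x-c_k|>2\cdot 2^{-k})}^p=2^{k(p/2-d)}\int_{|y|>2}|\phi(y)|^p\,dy$, which is comparable to $\|g_k\|_{L^p}^p$, so the complement contribution is as large as the main term and your bound degrades to $n^{1-1/(2r)}$ instead of $n^{1/p-1/(2r)}$. The fix is trivial: take $B_k$ of radius $2^{2n-2}$ (say). These are still pairwise disjoint because $|c_k-c_{k'}|\ge 2^{2n+n/4}$, and now both the cross terms $\|g_{k'}\|_{L^p(B_k)}$ (for $k'\ne k$) and the genuine tails $\|g_k\|_{L^p(\R^d\setminus B_k)}$ are $O(2^{-Nn})$ by Schwartz decay at scale $2^{k}\cdot 2^{2n}$. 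With this adjustment your argument goes through.
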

\begin{proof}
The proof is postponed to {\bf{A.1}} in Section \ref{sec4}.
\end{proof}

As an application of Lemma \ref{lem1}, we have
\begin{proposition}\label{pro1}
Let $(u_{0,n},v_{0,n})$ be defined by \eqref{h}. Then for $\sigma\in \R$ and $(p,r)\in[1, 2d]\times[1,d)$, there exists a positive constant $C$ independent of $n$ such that
\bal\label{u0}
&\|u_{0,n}\|_{\B^{\sigma}_{p,r}}+\|v_{0,n}\|_{\B^{\sigma+1}_{p,r}}\leq C2^{n(\sigma+\frac32)}2^{\frac{p-2d}{8p}n}n^{\frac{1}{p}-\frac{1}{2r}}.
\end{align}
In particular, it holds that
\bbal
\|u_{0,n}\|_{\B^{-\frac32}_{2d,r}}+\|v_{0,n}\|_{\B^{-\frac12}_{2d,r}}\leq Cn^{\frac{1}{2d}-\frac{1}{2r}}.
\end{align*}
\end{proposition}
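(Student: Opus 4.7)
The plan is to reduce the whole Besov computation to a single Littlewood--Paley block. The spectral localization \eqref{con} places $\widehat{f_n}$ inside the thin annulus $\{\tfrac{33}{24}2^n\le|\xi|\le\tfrac{35}{24}2^n\}$, which sits inside $\{\tfrac{4}{3}\cdot 2^n\le|\xi|\le\tfrac{3}{2}\cdot 2^n\}$. Since $\varphi$ was chosen so that $\varphi\equiv 1$ on $\{\tfrac{4}{3}\le|\xi|\le\tfrac{3}{2}\}$ and $\mathrm{supp}\,\varphi\subset\mathcal{C}=\{\tfrac{3}{4}\le|\xi|\le\tfrac{8}{3}\}$, the support of $\widehat{f_n}$ is contained in the region where $\varphi(2^{-n}\,\cdot)$ equals one and is disjoint from the supports of all $\varphi(2^{-j}\,\cdot)$ with $j\ne n$. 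Consequently
\[
\dot{\Delta}_n f_n = f_n,\qquad \dot{\Delta}_j f_n=0\ \text{ for }\ j\ne n,
\]
so the infinite Besov sum collapses to a single term and $\|f_n\|_{\dot{B}^\sigma_{p,r}}=2^{\sigma n}\|f_n\|_{L^p}$ for every admissible $(\sigma,p,r)$.

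\textbf{Assembling the estimate.} Inserting the $L^p$ bound of Lemma~\ref{lem1} (whose hypotheses $p\in[1,2d]$ and $r\in[1,d)$ match those of the proposition) into the previous identity gives
\[
\|f_n\|_{\dot{B}^\sigma_{p,r}}\le C\,2^{\sigma n}2^{\frac{p-2d}{8p}n}n^{\frac{1}{p}-\frac{1}{2r}}.
\]
The definitions $u_{0,n}=2^{3n/2}f_n$ and $v_{0,n}=2^{n/2}f_n\ee$ from \eqref{h} then yield
\[
\|u_{0,n}\|_{\dot{B}^\sigma_{p,r}} = 2^{3n/2}\|f_n\|_{\dot{B}^\sigma_{p,r}}, \qquad \|v_{0,n}\|_{\dot{B}^{\sigma+1}_{p,r}} = 2^{n/2}\|f_n\|_{\dot{B}^{\sigma+1}_{p,r}},
\]
and both quantities carry the same dyadic weight $2^{(\sigma+3/2)n}$; adding them produces \eqref{u0}. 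For the second assertion I would set $\sigma=-\tfrac{3}{2}$ and $p=2d$: both exponential factors $2^{(\sigma+3/2)n}$ and $2^{\frac{p-2d}{8p}n}$ collapse to $1$, leaving only $n^{1/(2d)-1/(2r)}$; since $r<d$ the exponent is strictly negative, so this quantity tends to zero as $n\to\infty$, consistent with the vanishing of the initial data in Theorem~\ref{th3}.

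\textbf{Where the work really lives.} At the level of Proposition~\ref{pro1} itself there is no serious obstacle: the statement is a bookkeeping corollary of the spectral localization \eqref{con} and Lemma~\ref{lem1}. The genuine technical content is hidden inside Lemma~\ref{lem1}, where one must exploit the physical-space separation of the wave packets $\phi(2^k(\,\cdot\,-2^{2n+k}\ee))$---their centers are at distance of order $2^{2n+k}$ while their supports have size of order $2^{-k}$, so different terms are essentially disjointly supported in $x$---together with a careful comparison between the $\ell^r$-summation in the definition of $f_n$ and the $L^p$-summation in its norm (this is the source of the factor $n^{1/p-1/(2r)}$). That is the step where I would expect the real effort, rather than in the present proposition.
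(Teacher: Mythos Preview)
Your proposal is correct and follows essentially the same approach as the paper: reduce the Besov norm to a single dyadic block via the spectral localization \eqref{con}, then apply Lemma~\ref{lem1} and the definitions in \eqref{h}. Your additional commentary on where the real difficulty lies (inside Lemma~\ref{lem1}) is accurate but goes beyond what the paper records here.
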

\begin{proof}
 Notice that $\widehat{\dot{\Delta}_jf_n}=\varphi(2^{-j}\cdot)\widehat{f_n}$ for all $j\in \mathbb{Z}$ and
$
\varphi(2^{-j}\xi)\equiv 1$ in $\left\{\xi\in\R^d: \ \frac{4}{3}2^{j}\leq |\xi|\leq \frac{3}{2}2^{j}\right\},
$
then we have
$
\widehat{\dot{\Delta}_jf_n}=0$ for  $j\neq n,
$
 and thus,
\begin{numcases}
{\dot{\Delta}_j(f_n)=}
f_n, &if $j=n$,\nonumber\\
0, &otherwise.\nonumber
\end{numcases}
Using the definition of Besov space, \eqref{con} and Lemma \ref{lem1} yields \eqref{u0}. This completes the proof of Proposition \ref{pro1}.
\end{proof}
\subsection{Key Estimations}
\quad
From now on, we choose ``certain time" as $t_n=\ep 2^{-2n}$ where $0<\ep\ll1$ will be fixed later and set $T=2^{-2n}$.

{\bf Step 1: Estimation of $U_1$}.

Recalling that $\pa_tU_1-\De U_1=0$ with
$U_1|_{t=0}=u_{0,n}$, using Lemma \ref{rg} and Proposition \ref{pro1}, one has
\bal
\|U_{1}\|_{\tilde{L}^\infty_{T}\B^{-\frac32}_{2d,r}}
\leq C\|u_{0,n}\|_{\B^{-\frac32}_{2d,r}}\leq Cn^{\frac{1}{2d}-\frac{1}{2r}}.
\end{align}
We should notice that, since the initial data $u_{0,n}$ is in the Schwartz class, we can deduce that $U_1$ belongs to the smoother class. More precisely, we know from Lemma \ref{rg} that
\bbal
U_{1}\in \mathcal{C}\Big([0,T];\B^{\frac{d}{p_0}-2}_{p_0,1}\cap\B^{-\frac{3}{2}}_{2d,1}\Big)\cap \LL^1\Big(0,T;\B^{\frac{d}{p_0}}_{p_0,1}\cap\B^{\frac{1}{2}}_{2d,1}\Big),
\end{align*}
where $ 1\leq p_0< 2d$, and $U_{1}$ satisfies the following
\bal\label{u1}
&\|U_{1}\|_{\tilde{L}^\infty_{T}(\B^{\frac{d}{p_0}-2}_{p_0,1})}+\|U_{1}\|_{\LL^1_{T}(\B^{\frac{d}{p_0}}_{p_0,1})}\leq C2^{(\frac{3d}{4p_0}-\fr38)n}n^{\frac{1}{p_0}-\frac{1}{2}},\\
&\|U_{1}\|_{\tilde{L}^\infty_{T}(\B^{-\frac{3}{2}}_{2d,1})}+\|U_{1}\|_{\LL^1_{T}(\B^{\frac{1}{2}}_{2d,1})}\leq Cn^{\frac{1}{2d}-\frac{1}{2}}.
\end{align}
In view of $V_1=v_{0,n}+\int_0^t\nabla U_1\dd\tau$, thus we have
\bbal
V_{1}\in \mathcal{C}\Big([0,T];\B^{\frac{d}{p_0}-1}_{p_0,1}\Big)\cap \LL^2\Big(0,T;\B^{\frac{1}{2}}_{2d,1}\Big).
\end{align*}
In fact, it follows from \eqref{u1} that
\bal\label{v1}
&\|V_1\|_{\LL^\infty_{T}(\B^{\fr{d}{p_0}-1}_{p_0,1})}\leq \|v_{0,n}\|_{\B^{\fr{d}{p_0}-1}_{p_0,1}}+\|U_1\|_{\LL^1_{T}(\B^{\fr{d}{p_0}}_{p_0,1})}\leq C2^{n(\fr{3d}{4p_0}-\frac38)}n^{\frac{1}{p_0}-\frac{1}{2}},\\
&\|V_1\|_{\LL^2_{T}(\B^{\fr{1}{2}}_{2d,1})}\leq T^\fr12\left(\|v_{0,n}\|_{\B^{\fr{1}{2}}_{2d,1}}+\|U_1\|_{\LL^1_{T}(\B^{\frac{3}{2}}_{2d,1})}\right)\leq Cn^{\frac{1}{2d}-\frac{1}{2}}.
\end{align}
{\bf Step 2: Estimation of $U_2$}.

Recalling that $\pa_tU_2-\De U_2=\D(U_1V_1)$ with $U_2|_{t=0}=0$ and $V_2(t)=\int_0^t\nabla U_2\dd s$, from Step 1, one has
\bbal
U_{2}\in  \mathcal{C}\Big([0,T];\B^{\frac{d}{p_0}-2}_{p_0,1}\Big)\cap\LL^1\Big(0,T;\B^{\frac{d}{p_0}}_{p_0,1}\cap\B^{\frac{1}{2}}_{2d,1}\Big)\quad\text{and}\quad
V_{2}\in \mathcal{C}\Big([0,T];\B^{\frac{d}{p_0}-1}_{p_0,1}\Big)\cap \LL^2\Big(0,T;\B^{\frac{1}{2}}_{2d,1}\Big).
\end{align*}
Indeed, from Lemma \ref{rg} and Lemma \ref{n1}, it follows that
\bbal
\|U_{2}\|_{\tilde{L}^\infty_{T}(\B^{\frac{d}{p_0}-2}_{p_0,1})}&\leq\|U_1V_1\|_{\LL^1_{T}(\B^{\fr{d}{p_0}-1}_{p_0,1})}
\leq\|U_1\|_{\LL^1_{T}(\B^{\fr{d}{p_0}}_{p_0,1})}\|V_1\|_{\LL^\infty_{T}(\B^{\fr{d}{p_0}-1}_{p_0,1})}
\leq C2^{(\frac{3d}{2p_0}-\fr34)n}n^{\frac{2}{p_0}-1}
\end{align*}
and
\bal\label{u2}
&\|U_2\|_{\LL^1_{T}(\B^{\fr{d}{p_0}}_{p_0,1})}+\|V_2\|_{\LL^\infty_{T}(\B^{\fr{d}{p_0}-1}_{p_0,1})}\nonumber\\
\leq&\ C T^{\fr12}\|U_2\|_{\LL^2_{T}(\B^{\fr{d}{p_0}}_{p_0,1})}
\leq C T^{\fr12}\|U_1V_1\|_{\LL^1_{T}(\B^{\fr{d}{p_0}}_{p_0,1})}\nonumber\\
\leq&\ CT^{\fr12}\left(\|U_1\|_{L^\infty_{T}(L^\infty)}\|V_1\|_{\LL^1_{T}(\B^{\fr{d}{p_0}}_{p_0,1})}
+\|V_1\|_{L^\infty_{T}(L^\infty)}\|U_1\|_{\LL^1_{T}(\B^{\fr{d}{p_0}}_{p_0,1})}\right)\nonumber\\
\leq&\ C2^{n(\fr{3d}{4p_0}-\frac{5}{8})}n^{\frac{1}{p_0}-\frac{1}{2r}-\frac{1}{2}},
\end{align}
where we have used the following estimates from  the classical $L^\infty$-$L^\infty$ estimate: $\|e^{t\Delta}f\|_{L^\infty} \leq \|f\|_{L^\infty}$ for $t>0$
\bbal
&\|U_1\|_{L^\infty_{T}(L^\infty)}\leq \|u_{0,n}\|_{L^\infty}\leq C2^{\frac{3}{2}n} 2^{\frac{1}{4}n}n^{-\frac{1}{2r}} \leq C2^{\frac{7}{4}n}n^{-\frac{1}{2r}},\\
&\|V_1\|_{L^\infty_{T}(L^\infty)}\leq \|v_{0,n}\|_{L^\infty}+T\|\nabla U_1\|_{L^\infty} \leq C\left(2^{\frac{1}{2}n} 2^{\frac{1}{4}n}+2^{-2n}2^n2^{\frac{3}{2}n} 2^{\frac{1}{4}n}\right)n^{-\frac{1}{2r}}\leq C2^{\frac{3}{4}n}n^{-\frac{1}{2r}},\\
&\|V_1\|_{\LL^1_{T}(\B^{\fr{d}{p_0}}_{p_0,1})}\leq CT\left(\|v_{0,n}\|_{\B^{\fr{d}{p_0}}_{p_0,1}}+T\|u_{0,n}\|_{\B^{\fr{d}{p_0}+1}_{p_0,1}}\right)\leq C2^{n(\fr{d}{p_0}-\frac{3}{2})}2^{\frac{p_0-2d}{8p_0}n}n^{\frac{1}{p_0}-\frac{1}{2}}.
\end{align*}
Similarly, for $q_0=2d$
\bal\label{v2}
\|U_2\|_{\LL^1_{T}(\B^{\fr{d}{q_0}}_{q_0,1})}+\|V_2\|_{\LL^2_{T}(\B^{\fr{d}{q_0}}_{q_0,1})}
\leq C2^{-\frac{n}{4}}n^{\frac{1}{q_0}-\frac{1}{2r}-\frac{1}{2}}.
\end{align}
{\bf Step 3: Estimation of $U_3$}.

We choose the index $(d,p_0,q_0)$ to satisfy that $q_0=2d$ and
\bbal
p_0=\bca
\fr72, \quad  d=2,\\
2d-1, \quad  d\geq 3.
\eca
\end{align*}
Obviously, it holds that
\bal\label{cond}
1\leq r<d<p_0<2d=q_0\quad \text{and}\quad\frac{3d}{2p_0}-1<0.
\end{align}
 For the sake of convenience, for $T= 2^{-2n}$, we denote
\bbal
&X_T=\|U_3(t,\cdot)\|_{\LL^\infty_T(\B^{\frac{d}{p_0}-2}_{p_0,1})}
+\|U_3(t,\cdot)\|_{\LL^1_T(\B^{\frac{d}{p_0}}_{p_0,1})}\quad\text{and}\quad
Y_T=\|V_3(t,\cdot)\|_{\LL^\infty_T(\B^{\frac{d}{p_0}-1}_{p_0,1})}.
\end{align*}
Obviously,
$$Y_T\leq C \|U_3(t,\cdot)\|_{\LL^1_T(\B^{\frac{d}{p_0}}_{p_0,1})}\leq CX_T.$$
Utilizing Lemma \ref{rg} to \eqref{3}, we have
\bal\label{l1}
X_T&\leq C\|U_3V_3+U_3(V_1+V_2)+V_3(U_1+U_2)+U_1V_2+U_2(V_1+V_2)\|_{\LL^1_T(\B^{\frac{d}{p_0}-1}_{p_0,1})}.
\end{align}
Utilizing Lemma \ref{n1}, one has
\bal
&\|U_3V_3\|_{\LL^1_T(\B^{\frac{d}{p_0}-1}_{p_0,1})}
\leq C\|U_3\|_{\LL^1_T(\B^{\frac{d}{p_0}}_{p_0,1})}\|V_3\|_{\LL^\infty_T(\B^{\frac{d}{p_0}-1}_{p_0,1})}\leq CX_T^2,\label{j1}\\
&\|U_2(V_1+V_2)\|_{\LL^1_T(\B^{\frac{d}{p_0}-1}_{p_0,1})}
\leq C\|U_2\|_{\LL^1_T(\B^{\frac{d}{p_0}}_{p_0,1})}\|V_1,V_2\|_{\LL^\infty_T(\B^{\frac{d}{p_0}-1}_{p_0,1})},\label{j2}\\
&\|U_1V_2\|_{\LL^1_T(\B^{\frac{d}{p_0}-1}_{p_0,1})}
\leq C\|U_1\|_{\LL^1_T(\B^{\frac{d}{p_0}}_{p_0,1})}\|V_2\|_{\LL^\infty_T(\B^{\frac{d}{p_0}-1}_{p_0,1})}.\label{j3}
\end{align}
Utilizing Lemma \ref{n2}, one has
\bal\label{j4}
\|U_3(V_1+V_2)\|_{\LL^1_T(\B^{\frac{d}{p_0}-1}_{p_0,1})}
&\leq C\|U_3\|_{\LL^2_T(\B^{\frac{d}{p_0}-1}_{p_0,1})}\|V_1+V_2\|_{\LL^2_T(\B^{\frac{d}{q_0}}_{q_0,1})}\nonumber\\
&\leq C\|U_3\|^{\fr12}_{\LL^\infty_T(\B^{\frac{d}{p_0}-2}_{p_0,1})}\|U_3\|^{\fr12}_{\LL^1_T(\B^{\frac{d}{p_0}}_{p_0,1})}
\|V_1,V_2\|_{\LL^2_T(\B^{\frac{d}{q_0}}_{q_0,1})}\nonumber\\
&\leq CX_T
\|V_1,V_2\|_{\LL^2_T(\B^{\frac{d}{q_0}}_{q_0,1})}
\end{align}
and
\bal\label{j5}
\|V_3(U_1+U_2)\|_{\LL^1_T(\B^{\frac{d}{p_0}-1}_{p_0,1})}
&\leq C\|V_3\|_{\LL^\infty_T(\B^{\frac{d}{p_0}-1}_{p_0,1})}\|U_1+U_2\|_{\LL^1_T(\B^{\frac{d}{q_0}}_{q_0,1})}
\leq CX_T\|U_1,U_2\|_{\LL^1_T(\B^{\frac{d}{q_0}}_{q_0,1})}.
\end{align}
Inserting \eqref{j1}-\eqref{j5} into \eqref{l1} yields
\bal\label{j6}
X_T&\leq CX^2_T+CX_T\Big(\|U_1,U_2\|_{\LL^1_T(\B^{\frac{d}{q_0}}_{q_0,1})}+\|V_1,V_2\|_{\LL^2_T(\B^{\frac{d}{q_0}}_{q_0,1})}\Big)
\nonumber\\&\quad + C\|U_2\|_{\LL^1_T(\B^{\frac{d}{p_0}}_{p_0,1})}
\|V_1,V_2\|_{\LL^\infty_T(\B^{\frac{d}{p_0}-1}_{p_0,1})}+
C\|U_1\|_{\LL^1_T(\B^{\frac{d}{p_0}}_{p_0,1})}
\|V_2\|_{\LL^\infty_T(\B^{\frac{d}{p_0}-1}_{p_0,1})}.
\end{align}
Due to \eqref{u1}-\eqref{v2}, we have for $q_0=2d$
\bbal
&C X_T\Big(\|U_1,U_2\|_{\LL^1_T(\B^{\frac{d}{q_0}}_{q_0,1})}+\|V_1,V_2\|_{\LL^2_T(\B^{\frac{d}{q_0}}_{q_0,1})}\Big)\leq Cn^{\frac{1}{2d}-\frac{1}{2}}X_T,
\\& C\|U_2\|_{\LL^1_T(\B^{\frac{d}{p_0}}_{p_0,1})}
\|V_1,V_2\|_{\LL^\infty_T(\B^{\frac{d}{p_0}-1}_{p_0,1})}+
C\|U_1\|_{\LL^1_T(\B^{\frac{d}{p_0}}_{p_0,1})}
\|V_2\|_{\LL^\infty_T(\B^{\frac{d}{p_0}-1}_{p_0,1})}\leq C2^{n(\frac{3d}{2p_0}-1)}n^{\frac{2}{p_0}}.
\end{align*}
Putting the above inequalities together with \eqref{j6} yields
\bbal
X_T&\leq CX^2_T+Cn^{\frac{1}{2d}-\frac{1}{2}}X_T+
C2^{n(\frac{3d}{2p_0}-1)}n^{\frac{2}{p_0}}.
\end{align*}
By using the continuity argument and condition \eqref{cond}, we can take $n$ large enough such that
\bbal
X_T\leq C2^{n(\frac{3d}{2p_0}-1)}n^{\frac{2}{p_0}}.
\end{align*}
Then, by the embedding $\B^{\frac{d}{p_0}-2}_{p_0,1}(\R^d)\hookrightarrow\B^{-\frac32}_{2d,r}(\R^d)$, we have
\bbal
\|U_3(t_n,\cdot)\|_{\B^{-\frac32}_{2d,r}}\leq CX_T\leq C2^{n(\frac{3d}{2p_0}-1)}n^{\frac{2}{p_0}}.
\end{align*}
\subsection{Ill-posedness}
\quad
Now, we can decompose $U_2$ as follows
$$U_2=U_{2,1}+U_{2,2},$$ where $U_{2,1}$ and $U_{2,2}$ satisfy respectively
\begin{equation}\label{z2}
\begin{cases}
\pa_tU_{2,1}-\De U_{2,1}=\D(u_{0,n}v_{0,n})=2^{2n}\pa_{x_1}(f_n^2),\\
U_{2,1}|_{t=0}=0,
\end{cases}
\end{equation}
and
\begin{equation}\label{z1}
\begin{cases}
\pa_tU_{2,2}-\De U_{2,2}=\D\left(U_1\int_0^t\nabla U_1\dd\tau+(U_1-u_{0,n})v_{0,n}\right),\\
U_{2,2}|_{t=0}=0.
\end{cases}
\end{equation}
Following the above argument, we know that for $i=1,2$
\bbal
U_{2,i}\in  \mathcal{C}\Big([0,T];\B^{\frac{d}{p_0}-2}_{p_0,1}\Big)\cap\LL^1\Big(0,T;\B^{\frac{d}{p_0}}_{p_0,1}\cap\B^{\frac{1}{2}}_{2d,1}\Big).
\end{align*}
 Using Lemma \ref{rg} and noticing that $U_1(t)-u_{0,n}=\int_0^t\Delta U_1\dd\tau$ yields for $t_n=\ep2^{-2n}$
\bal\label{lll}
\|U_{2,2}\|_{L^\infty_{t_n}(\B^{-\frac32}_{2d,r}(\mathbb{N}(n)))}&\leq\|U_{2,2}\|_{L^\infty_{t_n}(\B^{-1}_{d,r}(\mathbb{N}(n)))}\leq  Cn^{\fr{1}r-\fr{1}d}\|U_{2,2}\|_{L^\infty_{t_n}(\B^{-1}_{d,d}(\mathbb{N}(n)))}\nonumber\\
&\leq  Ct_nn^{\fr{1}r-\fr{1}d}\left(\left\|U_1\cdot\int_0^{t_n}\nabla U_1\dd\tau\right\|_{L^\infty_{t_n}(\B^{0}_{d,d})}+\left\|\int_0^{t_n}\Delta U_1\dd\tau \cdot v_{0,n}\right\|_{L^\infty_{t_n}(\B^{0}_{d,d})}\right)\nonumber\\
&\leq  Ct_n^2n^{\fr{1}r-\fr{1}d} \left(\|U_{1}\|_{L^\infty_{t_n}(L^{2d})}
\|\nabla U_{1}\|_{L^\infty_{t_n}(L^{2d})}+\|\Delta U_1\|_{L^\infty_{t_n}(L^{2d})}
\|v_{0,n}\|_{L^\infty_T(L^{2d})}\right)\nonumber\\
&\leq Ct_n^2n^{\fr{1}r-\fr{1}d} \left(2^n\|u_{0,n}\|_{L^{2d}}^2+2^{2n}\|u_{0,n}\|_{L^{2d}}
\|v_{0,n}\|_{L^{2d}}\right)\nonumber\\
&\leq C\ep^2.
\end{align}
Next, we give the lower bound estimation of $\|U_{2,1}(t_n,\cdot)\|_{\B^{-\frac32}_{2d,r}(\mathbb{N}(n))}$ which is crucial for the proof of the discontinuity of solutions.

Taking advantage of the Duhamel formula, then we have from \eqref{z2}
\bbal
U_{2,1}(t_n,\cdot)&=2^{2n}\int^{t_n}_0e^{({t_n}-\tau)\Delta}\pa_{x_1}(f^2_n)\dd \tau.
\end{align*}
Direct computations gives that for $\ell\in \mathbb{N}(n)$
\bbal
\dot{\Delta}_{\ell}U_{2,1}(t_n,\cdot)&=2^{2n}\int^{t_n}_0\mathcal{F}^{-1}\left(\varphi_{\ell}(\xi) e^{-(t_n-\tau)|\xi|^2}\mathcal{F}(\pa_{x_1}(f^2_n))\right)\dd \tau\\
&=2^{2n}\mathcal{F}^{-1}\left(\varphi_{\ell}(\xi) \frac{1-e^{-t_{n}|\xi|^{2}}}{|\xi|^{2}} \mathcal{F}(\pa_{x_1}(f^2_n)) \right)\\
&=\ep\left(\dot{\Delta}_{\ell}(\pa_{x_1}(f^2_n))
+\sum_{k\geq1}\frac{t_n^{k}}{(k+1)!}\dot{\Delta}_{\ell}(\pa_{x_1}\Delta^k(f^2_n))\right),
\end{align*}
where we have used Taylor's formula
$$\frac{1-e^{-t_{n}|\xi|^{2}}}{|\xi|^{2}}=t_n+t_n\sum_{k\geq1}\frac{t_n^{k}}{(k+1)!}(-|\xi|^2)^{k}.$$
By Bernstein's inequality, we deduce that
 \bbal
\left\|\sum_{k\geq1}\frac{t_n^{k}}{(k+1)!}\dot{\Delta}_{\ell}(\pa_{x_1}\Delta^k(f^2_n))\right\|_{L^{2d}}&\leq C\sum_{k\geq1}\frac{t_n^{k}}{(k+1)!}\left\|\dot{\Delta}_{\ell}(\pa_{x_1}\Delta^k(f^2_n))\right\|_{L^{2d}}\\
&\leq C\sum_{k\geq1}\frac{\ep^{k}}{(k+1)!}\left\|\dot{\Delta}_{\ell}\pa_{x_1}(f_n^2)\right\|_{L^{2d}}\\
&\leq C\ep\left\|\dot{\Delta}_{\ell}\pa_{x_1}(f_n^2)\right\|_{L^{2d}}.
\end{align*}
By the inverse triangle inequality, we have
\bal\label{y}
\|U_{2,1}(t_n,\cdot)\|_{\B^{-\frac32}_{2d,r}(\mathbb{N}(n))}&\geq \ep(1-C\ep) \left(\sum_{\ell\in\mathbb{N}(n)}2^{-\fr32r\ell}\left\|\dot{\Delta}_{\ell}\pa_{x_1}(f^2_{n})\right\|^r_{L^{2 d}\left(\mathbb{R}^{d}\right)}\right)^{\fr1r}.
\end{align}
For $k\in \mathbb{N}(n)$, using the simple fact $\sin^2\alpha=(1-\cos2\alpha)/2$, we decompose the term $n^{\frac {1}{r}}f_n^2$ as
\bal\label{sub}
n^{\frac {1}{r}}f_n^2&=\fr12 \sum\limits_{k\in \mathbb{N}(n)}2^{k}\phi^2\left(2^{k}(x-2^{2n+\ell}\ee)\right)
+\mathbf{G}+\mathbf{H},
\end{align}
where
\bbal
&\mathbf{G}:=\fr12\sum\limits_{k\in \mathbb{N}(n)}2^{k}\phi^2\left(2^{k}(x-2^{2n+\ell}\ee)\right)
\cos\left(\frac{17}{12}2^{n+1}x_1\right),\nonumber\\
&\mathbf{H}:=\fr12\sum\limits_{k,j\in \mathbb{N}(n)\atop
k\neq j}2^{\frac{k}2}2^{\frac{j}2}\phi\left(2^{k}(x-2^{2n+k}\ee)\right)\phi\left(2^{j}(x-2^{2n+\ell}\ee)\right)\left(
1-\cos\left(\frac{17}{12}2^{n+1}x_1\right)\right).
\end{align*}
Noticing that (for more details see {\bf{A.2}} in Appendix)
\bal\label{ob}
\dot{\Delta}_{\ell}\mathbf{G}=\dot{\Delta}_{\ell}\mathbf{H}=0\quad\text{for}\quad\ell\in \mathbb{N}(n),
\end{align}
then we have
\bbal
n^{\frac {1}{r}}\dot{\Delta}_{\ell}f_n^2&=
\fr12\dot{\Delta}_{\ell}\left(2^{\ell}\phi^2\big(2^{\ell}(x-2^{2n+\ell}\ee)\big)\right)+\fr12\dot{\Delta}_{\ell}\left(\sum\limits_{k\in \mathbb{N}(n)\atop
k\neq \ell}2^{k}\phi^2\big(2^{k}(x-2^{2n+k}\ee)\big)\right),
\end{align*}
which in turn gives
\bbal
n^{\frac {1}{r}}\pa_{x_1}\dot{\Delta}_{\ell}f_n^2&=
2^{2\ell-1}[h]\big(2^{\ell}(x-2^{2n+\ell}\ee)\big)+\fr12\pa_{x_1}\dot{\Delta}_{\ell}\left(\sum\limits_{k\in \mathbb{N}(n)\atop
k\neq \ell}2^{k}\phi^2\big(2^{k}(x-2^{2n+k}\ee)\big)\right)\\
&=:K_1+K_2,
\end{align*}
where
\bbal
h(x)
&=-\theta(x_1)\theta'(x_1)\theta^2(x_2)\theta^2(x_3)\cdots\theta^2(x_{d-1})\theta^2(x_d)
\cos\left(\frac{17}{12}x_d\right).
\end{align*}
Defining the set $\mathbf{B}_{\ell}$ by
$$
\mathbf{B}_{\ell}\equiv\left\{x:\big|2^{\ell}(x-2^{2n+\ell} \ee)\big|\leq 1\right\},
$$
then by change of variables, we have
\bbal
\|K_1\|_{L^{2d}(\mathbf{B}_{\ell})}
= 2^{2\ell-1}\left\|[h]\big(2^{\ell}(x-2^{2n+\ell}\ee)\big)\right\|_{L^{2d}(\mathbf{B}_{\ell})}
= 2^{\fr32\ell-1}\left\|h(y)\right\|_{L^{2d}(|y|\leq 1)}
= \tilde{c}2^{\fr32\ell}.
\end{align*}
Combining the estimate whose proof is relegated to {\bf{A.3}} in Section \ref{sec4}
\bal\label{k2}
\|K_2\|_{L^{2d}(\mathbf{B}_{\ell})}\leq C2^{-n},
\end{align}
thus for $\ell\in \mathbb{N}(n)$, we have
\bal\label{z}
\left\|\dot{\Delta}_{\ell}\pa_{x_1}(f^2_{n})\right\|_{L^{2 d}\left(\mathbf{B}_{\ell}\right)}&\geq n^{-\fr1r}\left(\left\|K_1\right\|_{L^{2 d}\left(\mathbf{B}_{\ell}\right)}-\left\|K_2\right\|_{L^{2 d}\left(\mathbf{B}_{\ell}\right)}\right)\nonumber\\
&\geq n^{-\fr1r}\left(\tilde{c}2^{\fr32\ell}-C2^{-n}\right)\nonumber\\
&\geq cn^{-\fr1r}2^{\fr32\ell}.
\end{align}
Inserting \eqref{z} into \eqref{y}, and from \eqref{lll} we conclude that for $\ep$ small enough
\bal\label{hhh}
\|U_{2}(t_n,\cdot)\|_{\B^{-\frac32}_{2d,r}(\mathbb{N}(n))}
&\geq \ep(c-C\ep)\geq C\ep.
\end{align}

Combining \eqref{hhh} and {\bf Step1}-{\bf Step3}, we obtain that for large $n$ enough and $\ep$ small enough
\bbal
\|u_{0,n}\|_{\B^{-\frac32}_{2d,r}}+\|v_{0,n}\|_{\B^{-\frac12}_{2d,r}}\leq Cn^{\frac{1}{2d}-\frac{1}{2r}}\rightarrow0,\;n\rightarrow\infty
\end{align*}
and
\bbal
\|u(t_n)\|_{\B^{-\frac32}_{2d,r}}&\geq \|U_2(t_n)\|_{\B^{-\frac32}_{2d,r}(\mathbb{N}(n))}
-\|U_1(t_n)\|_{\B^{-\frac32}_{2d,r}}
-\|U_3(t_n)\|_{\B^{-\frac32}_{2d,r}}
\\&\geq C\ep-Cn^{\frac{1}{2d}-\frac{1}{2r}}-C2^{n(\frac{3d}{2p_0}-1)}n^{\frac{2}{p_0}}\\&\geq \fr{C}2\ep.
\end{align*}
Thus, we have obtained a sequence of initial data such that it verifies the discontinuity of data-to-solution map. The proof of Theorem \ref{th3} is finished.{\hfill $\square$}

\section{Appendix}\label{sec4}
\setcounter{equation}{0}
For the sake of convenience, here we present more details in the computations.\\

{\bf A.1\quad Proof of Lemma \ref{lem1}.}\\
We assume that $p\in \mathbb{Z}^+$ without loss of generality. Since $\phi$ is a Schwartz function, we have for $100d\leq M\in \mathbb{Z}^+$
\bbal
|\phi(x)|\leq C(1+|x|)^{-M}.
\end{align*}
It is easy to show that
\bal\label{a0}
\left\|f_n\right\|^p_{L^p}&\leq n^{-\frac{p}{2r}} \int_{\R^d}\sum\limits_{\ell_1,\ell_2,\cdots,\ell_p\in \mathbb{N}(n)}\frac{2^{\frac12(\ell_1+\ell_2+\cdots+\ell_p)}}{(1+2^{\ell_1}|x
-2^{2n+\ell_1}\ee|)^M\cdots (1+2^{\ell_p}|x-2^{2n+\ell_p}\ee|)^M}\dd x
\nonumber\\&\leq n^{-\frac{p}{2r}}\sum_{\ell\in \mathbb{N}(n)}\int_{\R^d}\frac{2^{\frac{p}{2}\ell}}{(1+2^{\ell}|x-2^{2n+\ell}\ee|)^{pM}}\dd x\nonumber\\
&\quad+n^{-\frac{p}{2r}}\sum\limits_{(\ell_1,\ell_2,\cdots,\ell_p)\in \Lambda}\int_{\R^d}\frac{2^{\frac12(\ell_1+\ell_2+\cdots+\ell_p)}}{(1+2^{\ell_1}|x
-2^{2n+\ell_1}\ee|)^M\cdots (1+2^{\ell_p}|x-2^{2n+\ell_p}\ee|)^M}\dd x
\nonumber\\&\equiv n^{-\frac{p}{2r}}I_1+n^{-\frac{p}{2r}}I_2,
\end{align}
where the set $\Lambda$ is defined by
$$
\Lambda=\left\{(\ell_{1}, \ldots, \ell_{p}) \in \mathbb{N}^p(n)  \mid \exists 1 \leq k, j\leq p \text { s.t. } \ell_{k} \neq \ell_{j} \right\} .
$$
For the term $I_1$, by direct computations, one has
\bal\label{es-I1}
I_1=\sum_{\ell\in \mathbb{N}(n)}2^{(\frac{p}{2}-d)\ell}\int_{\R^d}\frac{1}{(1+|x|)^{pM}}\dd x\leq Cn2^{\frac{p-2d}{8}n}.
\end{align}
For the term $I_2$, we assume that $\ell_1< \ell_2$ without loss of generality, then $\ell_2- \ell_1\geq8$.
\bbal
&\quad\int_{\R^d}\frac{1}{(1+2^{\ell_1}|x-2^{2n+\ell_1}\ee|)^M (1+2^{\ell_2}|x-2^{2n+\ell_2}\ee|)^M}\dd x\\
&=\int_{\mathbf{A}_{\ell_1}}\frac{1}{(1+2^{\ell_1}|x
-2^{2n+\ell_1}\ee|)^M (1+2^{\ell_2}|x-2^{2n+\ell_2}\ee|)^M}\dd x\\
&\quad+\int_{\mathbf{A}_{\ell_1}^c}\frac{1}{(1+2^{\ell_1}|x
-2^{2n+\ell_1}\ee|)^M (1+2^{\ell_2}|x-2^{2n+\ell_2}\ee|)^M}\dd x\\
&=I_{2,1}+I_{2,2},
\end{align*}
where we defined the set $A_{\ell_1}$ by
$$
\mathbf{A}_{\ell_1}:=\left\{x:\left|x-2^{2 n+\ell_1} \ee\right| \leq 2^{2n}\right\}.
$$
Thus we obtain
\bal\label{a1}
I_{2,2}
\leq C(2^{\ell_{1}} 2^{2 n})^{-M}\int_{\mathbf{A}_{\ell_1}^c}\frac{1}{(1+2^{\ell_2}|x-2^{2n+\ell_2}\ee|)^M}\dd x
\leq C(2^{\ell_{1}} 2^{2 n})^{-M}2^{-d\ell_2}.
\end{align}
It is easy to deduce that for $x\in \mathbf{A}_{\ell_1}$
\begin{align*}
2^{\ell_{2}}\left|x-2^{2 n+\ell_2} \ee\right| & \geq 2^{\ell_{2}}\left|2^{2 n+\ell_2}-2^{2 n+\ell_1}\ee\right|- 2^{\ell_{2}} 2^{2 n} \geq  2^{\ell_{2}} 2^{2 n}.
\end{align*}
Similarly, we have
\bal\label{a2}
I_{2,1}
\leq (2^{\ell_{2}} 2^{2 n})^{-M}\int_{\mathbf{A}_{\ell_1}}\frac{1}{(1+2^{\ell_1}|x
-2^{2n+\ell_1}\ee|)^M }\dd x
\leq C(2^{\ell_{2}} 2^{2 n})^{-M}2^{-d\ell_1}.
\end{align}
We infer from \eqref{a1} and \eqref{a2} that
\bal\label{es-I2}
&I_2\leq C 2^{-2 Mn}\sum\limits_{(\ell_1,\ell_2,\cdots,\ell_p)\in \Lambda}(2^{-M\ell_{1}}2^{-d\ell_2}+2^{-M\ell_{2}}2^{-d\ell_1})2^{\frac12(\ell_1+\ell_2+\cdots+\ell_p)}\leq C2^{-Mn}.
\end{align}
Inserting \eqref{es-I1} and \eqref{es-I2} into \eqref{a0}, we have for large enough $n$
\bbal
&\|f_n\|_{L^p}\leq C n^{\fr1p-\fr1{2r}}2^{\frac{p-2d}{8p}n}.
\end{align*}
This completes the proof of Lemma \ref{lem1}.{\hfill $\square$}

{\bf A.2\quad Proof of \eqref{ob}.}\\
Notice that
\bbal
\mathbf{H}&=\fr12\sum\limits_{k, j\in \mathbb{N}(n)\atop
k\neq j}2^{\frac{k+j}2}\Phi_{k,j}(x)
\left(1-\cos\left(\frac{17}{12}2^{n+1}x_1\right)\right)
\end{align*}
with
$$\Phi_{k,j}(x):=\phi\big(2^{k}(x-2^{2n+k}\ee)\big)
\phi\big(2^{j}(x-2^{2n+j}\ee)\big),$$
and the definition of $\phi$, we deduce that for $j<k$
\bbal
&\mathrm{supp} \ \widehat{\Phi_{k,j}}\subset   \left\{\xi\in\R^d: \ \frac{33}{48}2^{k}\leq |\xi|\leq \frac{35}{48}2^{k}\right\},\\
\Rightarrow\quad&\mathrm{supp} \ \mathcal{F}\left[\Phi_{k,j}\cos\left(\frac{17}{12}2^{n+1} x_1\right)\right]\subset   \left\{\xi\in\R^d: \ \frac{33}{24}2^{n+1}\leq |\xi|\leq \frac{35}{24}2^{n+1}\right\}.
\end{align*}
Then, for $j<k$, we obtain that $\dot{\Delta}_{\ell}\mathbf{H}=0$, which also holds for $j>k$. Similarly, it holds that $\dot{\Delta}_{\ell}\mathbf{G}=0$.

Thus, we finish the proof of \eqref{ob}.{\hfill $\square$}

{\bf A.3\quad Proof of \eqref{k2}.}\\
Noting the fact that for $100d\leq N\in \mathbb{Z}^+$
$$|\check{\varphi}(x)|\leq C(1+|x|)^{-N},$$
then we have
\bal\label{j-3}
\left\|K_2\right\|_{L^{2d}(\mathbf{B}_{\ell})}
&\leq\sum\limits_{k\in \mathbb{N}(n)\atop k\neq \ell}2^{k}2^{d\ell}2^{\ell}\left\|\int_{\R^d}\check{\varphi}(2^{\ell}(x-y))
\phi^2\big(2^{k}(y-2^{2n+k}\ee)\big)
\dd y\right\|_{L^{2d}(\mathbf{B}_{\ell})}\nonumber\\
&\leq  \sum\limits_{k\in \mathbb{N}(n)\atop k\neq \ell}2^{k}2^{d\ell}2^{\ell}\left\|\int_{\R^d}\Big(1+2^{\ell}|x-y|\Big)^{-N}\Big(1+2^{k}|y-2^{2n+k}\ee|\Big)^{-2N}\dd y\right\|_{L^{2d}(\mathbf{B}_{\ell})}.
\end{align}
Dividing the integral region in terms of $y$ into the following two parts to estimate:
\begin{align*}
\R^d&=\left\{y :\; |y-2^{\ell+2 n} \ee | \leq 2^{2 n}\right\}\cup\left\{y :\; |y-2^{\ell+2 n} \ee|\geq 2^{2 n}\right\}= \mathbf{A}_{1} \cup \mathbf{A}_{2}.
\end{align*}
For $x \in \mathbf{B}_{\ell}$ and $y \in \mathbf{A}_{1}$, we conclude that
$$
\begin{aligned}
\left|y-2^{k+2 n} \ee\right| &=\left|(y-2^{\ell+2 n} \ee)+(2^{\ell+2 n} \ee-2^{k+2 n} \ee)\right|\geq\left|2^{\ell+2 n} -2^{k+2 n}\ee\right|-\left|y-2^{\ell+2 n} \ee\right| \geq 2^{2 n}.
\end{aligned}
$$
For $x \in \mathbf{B}_{\ell}$ and $y \in \mathbf{A}_{2}$, it is easy to check that
$$
\begin{aligned}
|x-y| &\geq\left|y-2^{\ell+2 n} \ee\right|-\left|x-2^{\ell+2 n} \ee\right| \geq2^{2 n}- 2^{-\ell} \geq 2^{2 n-1}.
\end{aligned}
$$
Then, we have
\bbal
&\left\|\int_{\R^d}\Big(1+2^{\ell}|x-y|\Big)^{-N}\Big(1+2^{k}|y-2^{2n+k}\ee|\Big)^{-2N}\dd y\right\|_{L^{2d}(\mathbf{B}_{\ell})}\\
\leq&~  C2^{-2(k+2n)N}\left\|\int_{\mathbf{A}_1}\Big(1+2^{\ell}|x-y|\Big)^{-N}\dd y\right\|_{L^{2d}(\mathbf{B}_{\ell})}\\
&+C2^{-(\ell+2 n)N}\left\|\int_{\mathbf{A}_2}\Big(1+2^{k}|y-2^{2n+k}\ee|\Big)^{-2N}\dd y\right\|_{L^{2d}(\mathbf{B}_{\ell})}\\
\leq&~  C\left(2^{-2d\ell}2^{-2(k+2n)N}+2^{-(\ell+2 n)N}2^{-2dk}\right)2^{-\frac{\ell}{2d}}.
\end{align*}
Plugging the above into \eqref{j-3} yields
\bbal
\left\|K_{2}\right\|_{L^{2d}(\mathbf{B}_{\ell})}\leq C\sum\limits_{k\in \mathbb{N}(n)\atop k\neq \ell}2^{k+\ell}2^{d\ell}\left(2^{-2d\ell}2^{-2(k+2n)N}+2^{-(\ell+2 n)N}2^{-2dk}\right)2^{-\frac{\ell}{2d}}\leq C2^{-n},
\end{align*}
which is nothing but \eqref{k2}.{\hfill $\square$}
\section*{Acknowledgments}
The authors would like to express their gratitude to the anonymous referees for valuable suggestions and comments which greatly improved the paper. J. Li is supported by the National Natural Science Foundation of China (11801090 and 12161004) and Jiangxi Provincial Natural Science Foundation (20212BAB211004). Y. Yu is supported by the National Natural Science Foundation of China (12101011). W. Zhu is supported by the National Natural Science Foundation of China (12201118) and Guangdong
Basic and Applied Basic Research Foundation (2021A1515111018).

\section*{Conflict of interest}
The authors declare that they have no conflict of interest.

\end{document}